\newcommand{\cS}{\mathcal{S}}
\newcommand{\cH}{\mathcal{H}}
\newcommand{\cU}{\mathcal{U}}
\newcommand{\eZ}{\mathbb Z}
\newcommand{\eR}{\mathbb R}
\newcommand{\eN}{\mathbb N}
\newcommand{\eB}{\mathbb B}
\newcommand{\eS}{\mathbb S}
\theoremstyle{plain}
\newtheorem{teo}{Theorem}
\newtheorem{lem}[teo]{Lemma}
\theoremstyle{definition}
\newtheorem{definition}[teo]{Definition}
\newtheorem{rem}[teo]{Remark}
\newcommand{\wc}{\xrightarrow{\mathcal{D}}}
\newcommand{\wcn}{\xrightarrow[n\to\infty]{\mathcal{D}}}
\newcommand{\pc}{\xrightarrow[n\to\infty]{\mathrm{P}}}
\newcommand{\fddc}{\xrightarrow[n\to\infty]{\mathrm{fdd}}}
\newcommand{\wt}{\widetilde}
\newcommand{\dd}{\mathrm{d}}
\newcommand{\seq}{\mathrm{seq}}
\newcommand{\ol}{\overline}
\newcommand{\var}{\mathrm{var}}
\newcommand{\flo}[1]{\lfloor #1 \rfloor}
\definecolor{darkorange}{rgb}{1.0, 0.55, 0.0}
\definecolor{royalblue4}{rgb}{0.14,0.25,0.54}
\title{Convergence of $U$-Processes in H\"older Spaces with Application to Robust Detection of a Changed Segment}
\author{Alfredas Ra\v ckauskas\footnote{The research is supported by the Research Council of Lithuania, grant No. S-MIP-17-76}\ \   and Martin Wendler}
\begin{document}

\maketitle

\begin{abstract} To detect a changed segment (so called epidemic changes) in a time series, variants of the CUSUM statistic are frequently used. However, they are sensitive to outliers in the data and do not perform well for heavy tailed data, especially when short segments get a high weight in the test statistic. We will present a robust test statistic for epidemic changes based on the Wilcoxon statistic. To study their asymptotic behavior, we prove functional limit theorems for $U$-processes in H\"older spaces. We also study the finite sample behavior via simulations and apply the statistic to a real data example.
\end{abstract}

{\bf Keywords:} Wilcoxon statistic; epidemic change; functional central limit theorem; H\"older space

\section{Introduction}\label{sec:intro}

In change point detection, the hypothesis is typically stationarity, but there are different types of alternatives, like the at most one change point or multiple change points. In this article, we are interested in testing stationarity with respect to the so called epidemic change or changed segment alternative: We have a random sample $X_1, X_2, \dots, X_n$ (with values in a sample space $(\eS, \cS)$ and distributions $P_{X_1}, P_{X_2}, \dots, P_{X_n}$) and we wish to test the null hypothesis
$$
H_0: P_{X_1} = P_{X_2}= \cdots = P_{X_n},
$$
versus the alternative 
\begin{align*}
H_1: \ \ &\textrm{there is a segment}\ \  I^*:=\{k^*+1, \dots, m^*\}\subset I_n:=\{1, 2, \dots, n\}\ \ \textrm{such that}\\
&P_{X_i}=\begin{cases} P \ \ &\textrm{for} \ \ i\in I_n\setminus I^*\\  Q \ \ &\textrm{for}\ \ i\in I^*,
\end{cases}\ \ \textrm{and}\ \ P\not=Q.
\end{align*}
Under $H_1$ the sample $(X_i, i\in I^*)$ constitutes a changed segment starting at $k^*$ and having the length $\ell^*=m^*-k^*$ and $Q$ is then the corresponding distribution in the changed segment. 
This type of alternative is of special relevance in epidemiology and has first been studied by Levin and Kline \cite{levin1985} in the case of a change in mean. Their test statistic is a generalization of the CUSUM (cumulated sum) statistic. Simultaneously, epidemic-type models were introduced by Commenges, Seal and Pinatel \cite{CSP} in connection with experimental neurophysiology.

If the changed segment is rather short compared to the sample size, tests that give higher weight to short segments have more power. Asymptotic critical values for such tests have been proved by Siegmund \cite{siegmund1988} in the Gaussian case (see also \cite{siegmund1995}). The logarithmic case was treated in Kabluchko and Wang \cite{kabluchko}, and the regular varying case in Mikosch and Ra\v{c}kauskas \cite{MR}. Yao \cite{yao1993} and Hu\v{s}kov\'a \cite{huskova1995} compared tests with different weightings. Ra\v ckauskas and Suquet \cite{rackauskas2004}, \cite{rackauskas2007} have suggested using a compromise weighting, that allows to express the limit distribution of the test statistic as a function of a Brownian motion. However, in order to apply the continuous mapping theorem for this statistic, it is necessary to establish the weak convergence of the partial sum process to a Brownian motion with respect to the H\"older norm.

It is well known that the CUSUM statistic is sensitive to outliers in the data, see e.g. Pr{\'a}{\v{s}}kov{\'a} and Chochola \cite{praskova2014}. The problem becomes worse if higher weights are given to shorter segments. A common strategy to obtain a robust change point test is to adapt robust two-sample tests like the Wilcoxon one. This was first used by Darkhovsky \cite{darkhovsky1976} and by Pettitt \cite{pettitt1979} in the context of detecting at most one change in a sequence of independent observations. For a comparison of different change point test see Wolfe and Schechtmann \cite{wolfe1984}. The results on the Wilcoxon type change point statistic were generalized to long range dependent time series by Dehling, Rooch, Taqqu \cite{dehling2013}. The Wilcoxon statistic can either be expressed as a rank statistic or as a (two-sample) $U$-statistic. This motivated Cs\"org\H{o} and Horv\'ath \cite{csorgo1988} to study more general $U$-statistics for change point detection, followed by Ferger \cite{ferger1994} and Gombay \cite{gombay2001}. Orasch \cite{orasch2004} and D\"oring \cite{doering2010} have studied $U$-statistics for detecting multiple change-points in a sequence of independent observations. Results for change point tests based on general two-sample $U$-statistics for short range dependent time series were given by Dehling, Fried, Garcia, Wendler \cite{dehling2015}, for long range dependent time series by Dehling, Rooch, Wendler \cite{dehling2017}. Betken \cite{betken2016} has suggested a self-normalized change-point test based on the Wilcoxon statistic. By using self-normalization, it is possible to avoid the estimation of unknown parameters in the limit distribution.

Gombay \cite{gombay1994} has suggested to use a Wilcoxon type test also for the epidemic change problem. The aim of this paper is to generalize these results in three aspects: to study more general $U$-statistics, to allow the random variable to exhibit some form of short range dependence, and to introduce weightings to the statistic. This way, we obtain a robust test which still has good power for detecting short changed segments. To obtain asymptotic critical values, we will prove a functional central limit theorem for $U$-processes in H\"older spaces.

The article is organized as follows. Section \ref{sec:test} introduces $U$-statistics type test statistics to deal with the epidemic change point problem. In Section \ref{sec:simu} some experimental results are presented and discussed whereas Section \ref{sec:data} deals with a concrete data set. Section \ref{sec:process} and Section \ref{sec:null} constitute the theoretical part of the paper where asymptotic results are established under the null hypothesis. Consistency under the alternative of a changed segment is discussed in Section \ref{sec:alt}. Finally in Section \ref{sec:crit}, we present the table with asymptotic critical values for the tests under consideration.

\section{Tests for changed segment based on $U$-statistics}\label{sec:test}

A general approach for constructing procedures to detect a changed segment is to use a measure of heterogeneity $\Delta_n(k,m)$ between two segments
$$
\{X_i, i\in I(k,m)\}\ \ \textrm{and}\ \ \{X_i, i\in I^c(k,m)\}, \ \  0\le k<m\le n,
$$ 
where $I(k,m)=\{k+1, \dots, m\}$ and $I^c(k, m)=I_n\setminus I(k, m)$.  As neither the beginning $k^*$ nor the end $m^*$ of changed segment is known, the statistics
$$
T_n:= \max_{0\le k<m\le n}\frac{1}{\rho_n(m-k)}\Delta_n(k, m)
$$
may be used to test the presence of a changed segment in the sample $(X_i)$, where $\rho_n(m-k)$ is a factor smoothing over the influence of either too short or too large data windows. In this paper we consider a class of $U$-statistic type measures of heterogeneity $\Delta_n(k, m)$ defined via a measurable function $h:\eS\times \eS\to \eR$ by
$$
\Delta_n(k, m)=\Delta_{h, n}(k, m):=\sum_{i\in I(k, m)}\sum_{j\in I_n\setminus I(k, m)}h(X_i, X_j),
$$
and the corresponding test statistics 
\begin{equation}\label{T:nh}
T_{n}(\gamma, h)=\max_{0\le k<m\le n}\frac{|\Delta_{h, n}(k, m)|}{\rho_\gamma((m-k)/n)},
\end{equation}
where $0\le \gamma<1/2$ and
$$
\rho_{\gamma}(t)=[t(1-t)]^{\gamma}, \ 0<t<1.
$$ 
Although other weighting functions are possible our choice is limited by application of a functional central limit theorem in H\"older spaces. 
 
Recall the kernel $h$ is symmetric if $h(x, y)=h(y, x)$ and antisymmetric if $h(x, y)=-h(y, x)$ for all $x, y\in \eS$. Any non symmetric kernel $h$ can be antisymmetrized by considering 
$$
\wt{h}(x, y)=h(x, y)-h(y, x), x, y\in \eS.
$$
Let's note that the kernel $h$ is antisymmetric if and only if $E[h(X,Y)]=0$ for any independent random variables with the same distribution such that the expectation exists. The if part follows by Fubini and antisymmetry. To see the only if part, first consider the one point distribution  $X=x$ and $Y=x$ almost surely to conclude that $h(x,x)=0$ for all $x$. Next, consider the two point distribution $P(X=x)=P(X=y)=1/2$ and conclude that $0=E[h(X,Y)]=(h(x,x)+h(y,y)+h(x,y)+h(y,x))/4$ and thus $h(x,y)=-h(y,x)$. So a $U$-statistic with antisymmetric kernel has expectation $0$ if the observations are independent and identically distributed and are good candidates for change point tests. We only consider antisymmetric kernels in this paper.

In the case of a real valued sample, examples of antisymmetric kernels include the CUSUM kernel $h_C(x,y)=x-y$ or the Wilcoxon kernel $h_W(x,y)=\bm{1}_{\{x<y\}}-\bm{1}_{\{y<x\}}$. The kernel $h_W$ leads to Wilcoxon type statistics
$$
T_n(\gamma, h_W):=\max_{0\le k<m\le n}\frac{1}{\rho_\gamma((m-k)/n)}\Big|\sum_{i\in I(k, m)}\sum_{j\in I_n\setminus I(k, m)}\Big[\bm{1}_{\{X_i<X_j\}}-\bm{1}_{\{X_j< X_i\}}\Big]\Big|
$$ 
whereas with the kernel $h_C$ we get CUSUM type statistics
$$
n^{-1}T_n(\gamma, h_C)=\max_{0\le k<m\le n}\frac{1}{\rho((m-k)/n)}\Big|\sum_{i=k+1}^m[X_i-\ol{X}_n]\Big|,
$$
where $\ol{X}_n:=n^{-1}\sum_{i=1}^n X_i$. As more general classes of kernels and corresponding statistics we can consider the CUSUM test of transformed data ($h(x, y):=\psi(x)-\psi(y)$) or a test based on two-sample M-estimators ($h(x,y)=\psi(x-y)$ for some monotone function, see Dehling et al. \cite{dehling2017}).

Based on invariance principles in H\"older spaces discussed in the next section, we derive the limit distribution of test statistics $T_n(\gamma, h)$. Theorems \ref{theoremA} and Theorem \ref{theoremB} provide examples of our results. Let $W=(W(t), t\ge 0)$ be a standard Wiener process and $B=(B(t), 0\le t\le 1)$ be a corresponding Brownian bridge. Define for $0\le \gamma<1/2$,
$$
T_{\gamma}:=\sup_{0\le s<t\le 1}\frac{|B(t)-B(s)|}{\rho_\gamma(t-s)}.
$$

\begin{teo}\label{theoremA} If $(X_i)_{i\in\eN}$ are independent and identically distributed random elements and $h$ is an antisymmetric kernel with $E[|h(X_1,X_2)|^{p}]<\infty$ for some $p>2$, then for any $\gamma<(p-2)/2p$, we have 
$$
\lim_{n\to\infty}P(n^{-3/2}\sigma_{h}^{-1}T_n(\gamma, h)\le x)=P(T_\gamma\le x),\ \ \textrm{for all}\ \ x\in \eR,
$$
where the variance parameter $\sigma_h$ is defined by $\sigma_h^2=\var(h_1(X_i))$ and $h_1(x)=E[h(x,X_i)]$.
\end{teo}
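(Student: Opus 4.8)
The plan is to reduce the statement to a functional central limit theorem for the $U$-process in a suitable Hölder space and then apply the continuous mapping theorem. First I would use the Hoeffding decomposition of the antisymmetric kernel: since $h$ is antisymmetric, $E[h(X_1,X_2)]=0$ and $h_2(x,y):=h(x,y)-h_1(x)+h_1(y)$ is a degenerate kernel, so
$$
\Delta_{h,n}(k,m)=\sum_{i\in I(k,m)}\sum_{j\in I_n\setminus I(k,m)}\big[h_1(X_i)-h_1(X_j)\big]+\sum_{i\in I(k,m)}\sum_{j\in I_n\setminus I(k,m)}h_2(X_i,X_j).
$$
The linear (projection) term telescopes: writing $S_j=\sum_{i=1}^j h_1(X_i)$, one checks that $\sum_{i\in I(k,m)}\sum_{j\notin I(k,m)}[h_1(X_i)-h_1(X_j)]=n\,(S_m-S_k)-(m-k)S_n$, which is exactly $n$ times the CUSUM-type increment of the partial-sum process of the $h_1(X_i)$. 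The second (degenerate) term must be shown to be asymptotically negligible uniformly over $0\le k<m\le n$ after dividing by $n^{3/2}\rho_\gamma((m-k)/n)$.

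Next I would invoke the invariance principle in Hölder space announced in Section \ref{sec:process}: under $E[|h(X_1,X_2)|^p]<\infty$ with $p>2$ the i.i.d.\ random variables $h_1(X_i)$ satisfy $E[|h_1(X_1)|^p]<\infty$ (by Jensen), and hence for $\gamma<(p-2)/2p$ the polygonal partial-sum process $n^{-1/2}\sigma_h^{-1}\sum_{i\le nt}h_1(X_i)$ converges weakly in the Hölder space $H_\gamma^o[0,1]$ to a standard Brownian motion $W$; equivalently the centered version converges to the Brownian bridge $B$. The functional
$$
f\mapsto \sup_{0\le s<t\le1}\frac{|f(t)-f(s)|}{\rho_\gamma(t-s)}
$$
is continuous on $H_\gamma^o[0,1]$ (essentially the definition of the Hölder seminorm with the weight $\rho_\gamma$), so the continuous mapping theorem gives that the rescaled maximum of the linear term converges in distribution to $T_\gamma$. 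Passing from the polygonal process to the discrete maximum over $(k,m)$ introduces only a negligible error, controlled by the modulus of continuity, again because $\gamma<1/2$.

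The main obstacle is the uniform negligibility of the degenerate part $\sum_{i\in I(k,m)}\sum_{j\notin I(k,m)} h_2(X_i,X_j)$ relative to the weight $n^{3/2}\rho_\gamma((m-k)/n)$; the difficulty is genuinely at the short segments, where $(m-k)$ is of smaller order than $n$ and the denominator is correspondingly small. Here I would prove a maximal inequality for degenerate $U$-statistics of the form
$$
E\Big[\max_{0\le k<m\le n}\Big|\sum_{i\in I(k,m)}\sum_{j\notin I(k,m)}h_2(X_i,X_j)\Big|^{p}\Big]
$$
with the correct dependence on $(m-k)$, using a chaining/blocking argument over dyadic scales of the segment length together with the $L^p$ bound $E|h_2(X_1,X_2)|^p<\infty$; the moment condition $p>2$ together with $\gamma<(p-2)/2p$ is exactly what makes the resulting bound $o(n^{3p/2}\rho_\gamma((m-k)/n)^p)$ summable over the dyadic scales. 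This is the technical heart; everything else is the Hoeffding decomposition, Jensen's inequality, the cited Hölder-space invariance principle, and the continuous mapping theorem. I would expect this step (and the precise form of the Hölder invariance principle it relies on) to be exactly what Section \ref{sec:process} and Section \ref{sec:null} are devoted to, so in the body of the proof I would simply quote those results and assemble them as above.
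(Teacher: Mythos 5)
Your overall architecture --- Hoeffding decomposition, H\"olderian invariance principle for the partial sums of $h_1(X_i)$, negligibility of the degenerate part, then continuous mapping plus a discretization step --- is exactly the skeleton used in the paper (decomposition (\ref{H-eq:1}), Theorem \ref{general:FCLT}, Theorem \ref{iidsample} and Theorem \ref{teo1}). However, the step you yourself call the technical heart is left as a plan rather than a proof: you announce a $p$-th moment maximal inequality for the degenerate part, to be obtained ``by a chaining/blocking argument over dyadic scales'', but no such inequality is stated or proved, and the claimed bound $o\bigl(n^{3p/2}\rho_\gamma((m-k)/n)^p\bigr)$ is not even well formed (after taking the maximum over $(k,m)$ the left-hand side no longer depends on $m-k$). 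As written, the uniform negligibility of the degenerate part relative to the weight --- precisely the place where short segments could cause trouble --- is not established, so there is a genuine gap at this point.

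Moreover, the machinery you propose there is heavier than needed, and your attribution of the constraint $\gamma<(p-2)/2p$ to this step is misplaced. In the i.i.d.\ case the degenerate kernel $g(x,y)=h(x,y)-h_1(x)+h_1(y)$ satisfies $E[g(X_i,X_j)g(X_{i'},X_{j'})]=0$ unless $i=i'$ and $j=j'$, so $E(U_{g,m}-U_{g,k})^2\le C(m-k)(n-(m-k))$ using only the second moment of $h$; feeding this into the Ciesielski sequential-norm characterization of the H\"older norm (Lemma \ref{lemma1}) gives $E\|n^{-3/2}\cU_{g,n}\|_\gamma^2\le C n^{2\gamma-1}\to 0$ for every $\gamma<1/2$ --- this is Lemma \ref{H-lem:3}. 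The restriction $\gamma<1/2-1/p=(p-2)/2p$ enters only through the H\"older tightness of the linear part, via Rosenthal's inequality for $\sum_i h_1(X_i)$ (Lemma \ref{H-lem:4}). Two smaller glosses: the functional $f\mapsto\sup_{s<t}|f(t)-f(s)|/\rho_\gamma(t-s)$ is not ``essentially the H\"older seminorm'', since $\rho_\gamma(t-s)$ also vanishes as $t-s\to 1$; its continuity, and the replacement of the discrete maximum over $(k,m)$ by the supremum over $(s,t)$, require the separate estimate for $t-s>1/2$ and the equicontinuity argument of Lemma \ref{discret} used in the proof of Theorem \ref{teo1}, which your appeal to the modulus of continuity does not by itself cover.
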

Note that in practice, the random variables $X_i$ might not have high moments, but if we use a bounded kernel like $h_W$, we know that the condition of the theorem holds for any $p\in(0,\infty)$, so we have the convergence for any $\gamma <1/2$. Also, in practical applications, the variance parameter has to be estimated. This can be done by
\begin{equation}\label{sigma:nh}
\hat{\sigma}^2_{n,h}:=\frac{1}{n}\sum_{i=1}^n \hat{h}_1^2(X_i)
\end{equation}
with $\hat{h}_1(x)=n^{-1}\sum_{j=1}^nh(x,X_i)$. 

For the case of a dependent sample, we consider absolutely regular sequences of random elements (also called $\beta$-mixing). Recall that the coefficients of absolute regularity $(\beta_m)_{m\in\eN}$ are defined by
\begin{equation*}
\beta_m=E\sup_{A\in\mathcal{F}_{m}^\infty}\left(P(A|\mathcal{F}_{-\infty}^{0})-P(A)\right),
\end{equation*}
where $\mathcal{F}_a^b:=\sigma(X_a,X_{a+1},\ldots,X_b)$ is the $\sigma$-field generated by $X_a,X_{a+1},\ldots,X_b$.

\begin{teo}\label{theoremB} Let $(X_i)_{i\in\eN}$ be a stationary, absolutely regular sequence and $h$ be an antisymmetric kernel, and assume that the following conditions are satisfied:
	\begin{itemize}
		\item[(i)] 
	 $\sup_{i,j\in \eN}E|h(X_i,X_j)|^{q}<\infty$ for some $q>2$; 
	 \item[(ii)] $\sum_{k=1} ^{\infty}k\beta^{1-2/q}_k<\infty$ and $\sum_k k^{r/2-1}\beta^{1-r/q}_k<\infty$ for some $2<r<q$. 
	 \end{itemize}
	 Then for any $0\le \gamma<1/2-1/r$, we have 
$$
\lim_{n\to\infty}P(n^{-3/2}\sigma_{\infty}^{-1}T_n(\gamma, h)\le x)=P(T_\gamma\le x),\ \ \textrm{for all}\ \ x\in \eR,
$$
where the long run variance parameter $\sigma_\infty$ is given by
\begin{equation*}
\sigma_\infty^2=\var\big(h_1(X_1)\big)+2\sum_{k=2}^\infty \operatorname{cov}\big(h_1(X_1),h_1(X_k)\big)
\end{equation*}
\end{teo}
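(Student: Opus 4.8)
The plan is to isolate, via a Hoeffding decomposition, the linear part of the weighted $U$-process, to handle it with a Hölder-space invariance principle for the $\beta$-mixing sequence $\bigl(h_1(X_i)\bigr)$, to show that the degenerate remainder is uniformly negligible, and then to conclude by the continuous mapping theorem. Put $h_1(x):=E[h(x,X_1)]$ (the expectation against the common marginal law) and $h_2(x,y):=h(x,y)-h_1(x)+h_1(y)$; from antisymmetry and the characterization recorded in Section \ref{sec:test} one gets $E[h_1(X_1)]=0$ and $E[h_2(x,X_1)]=E[h_2(X_1,y)]=0$, so $h_2$ is a canonical (two-sample degenerate) kernel. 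Summing $h(X_i,X_j)=h_1(X_i)-h_1(X_j)+h_2(X_i,X_j)$ over $i\in I(k,m)$, $j\in I_n\setminus I(k,m)$ gives, for all $0\le k<m\le n$,
\begin{align*}
\Delta_{h,n}(k,m)&=n\Big(\sum_{i=k+1}^{m}h_1(X_i)-\tfrac{m-k}{n}\sum_{j=1}^{n}h_1(X_j)\Big)+R_n(k,m),\\
R_n(k,m)&:=\sum_{i\in I(k,m)}\ \sum_{j\in I_n\setminus I(k,m)}h_2(X_i,X_j).
\end{align*}
Dividing by $n^{3/2}$, the first term equals $\wt\zeta_n(m/n)-\wt\zeta_n(k/n)$, where $\wt\zeta_n(t):=\zeta_n(t)-t\,\zeta_n(1)$ is the polygonal bridge associated with $\zeta_n$, the polygonal interpolation of $\bigl(k/n,\ n^{-1/2}\sum_{i\le k}h_1(X_i)\bigr)_{0\le k\le n}$.

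The main probabilistic input I would establish is $\wt\zeta_n\Rightarrow\sigma_\infty B$ in the separable Hölder space $H^o_\gamma[0,1]$ for every $\gamma<1/2-1/r$. The sequence $\bigl(h_1(X_i)\bigr)_i$ is stationary, centered and absolutely regular with the same coefficients $(\beta_k)$; condition (i) (together with a standard $\beta$-mixing coupling bound) gives $h_1(X_1)\in L^q\subset L^r$, and the second series in (ii), $\sum_k k^{r/2-1}\beta_k^{1-r/q}<\infty$, is exactly what the $\beta$-mixing Hölder invariance principle (of Ra\v ckauskas--Suquet type) requires at moment order $r$; its limiting variance is $\lim_n n^{-1}\var\bigl(\sum_{i\le n}h_1(X_i)\bigr)=\var(h_1(X_1))+2\sum_{k\ge2}\operatorname{cov}(h_1(X_1),h_1(X_k))=\sigma_\infty^2$, the series converging absolutely under (i)--(ii). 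Applying the continuous mapping theorem to $\Phi(x):=\sup_{0\le s<t\le1}|x(t)-x(s)|/\rho_\gamma(t-s)$ --- which is finite and Lipschitz on the closed subspace of $H^o_\gamma$ of functions vanishing at $0$ and $1$, the factor $(1-(t-s))^\gamma$ in $\rho_\gamma$ being harmless there because $x(1)=x(0)$ bounds the increments over intervals of length close to $1$ --- yields that the linear part of $n^{-3/2}T_n(\gamma,h)$ converges in law to $\sigma_\infty T_\gamma$, modulo the difference between the grid maximum over $\{k/n,m/n\}$ and the supremum $\Phi(\wt\zeta_n)$; that difference is $o_P(1)$ by a routine estimate using $\max_i|h_1(X_i)|=o_P(\sqrt n)$ (valid since $r>2$).

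The second step, which I expect to be the technical core, is
\begin{equation*}
\frac{1}{n^{3/2}}\max_{0\le k<m\le n}\frac{|R_n(k,m)|}{\rho_\gamma((m-k)/n)}\ \pc\ 0 .
\end{equation*}
For a fixed pair with $N=m-k$, degeneracy of $h_2$ and the covariance bounds for $\beta$-mixing sequences controlled by the first series in (ii), $\sum_k k\,\beta_k^{1-2/q}<\infty$, give $E\bigl[R_n(k,m)^2\bigr]\le C\,N(n-N)$ and, more generally, $E|R_n(k,m)|^{q}\le C\,\bigl(N(n-N)\bigr)^{q/2}$; dividing by $n^{3}\rho_\gamma(N/n)^{2}$ and using $\gamma<1/2$ shows each normalized term is $O_P(n^{-1/2})$. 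Passing to the weighted maximum over the $O(n^2)$ pairs cannot be done by a union bound; instead I would run a dyadic chaining argument over the endpoints $k/n,m/n$ --- the device used in Hölder central limit theorems for partial sums --- splitting $R_n(k,m)$ along a binary grid and summing the moment bounds against the weights $\rho_\gamma$, with the restriction $\gamma<1/2-1/r$ (and $q$-th moments available, $q>r$) ensuring the resulting series is summable so that the maximum vanishes in probability.

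Combining the two steps by the triangle inequality and Slutsky's lemma, $n^{-3/2}T_n(\gamma,h)$ has the same weak limit as its linear part, namely $\sigma_\infty T_\gamma$; since $T_\gamma$ (the supremum of a continuous Gaussian field) has a continuous distribution function, this gives $P\bigl(n^{-3/2}\sigma_\infty^{-1}T_n(\gamma,h)\le x\bigr)\to P(T_\gamma\le x)$ for every $x\in\eR$. The argument parallels the proof of Theorem \ref{theoremA}, with (i)--(ii) replacing independence and simultaneously supplying the Hölder invariance principle for the linear part and the maximal inequality for the degenerate part.
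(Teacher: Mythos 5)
Your overall architecture is the same as the paper's: Hoeffding decomposition of $\Delta_{h,n}(k,m)$ into a bridge of the partial sums of $h_1(X_i)$ plus a degenerate remainder, a H\"older-space invariance principle for the linear part under exactly the Yokoyama/Herrndorf-type conditions encoded in (i)--(ii), negligibility of the remainder, and then continuous mapping plus a grid-versus-supremum discretization step (the paper packages the last step as Lemma \ref{discret} inside Theorem \ref{teo1}, and the FCLT as Theorem \ref{teo:mixing} via Theorem \ref{general:FCLT}). The linear part of your argument is sound.

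The genuine gap is in your treatment of the degenerate remainder $R_n(k,m)$. Your plan hinges on the bound $E|R_n(k,m)|^{q}\le C\,(N(n-N))^{q/2}$ and a dyadic chaining argument at moment order $q$. Conditions (i)--(ii) deliver only the \emph{second}-moment bound $E[R_n(k,m)^2]\le C\,N(n-N)$ (this is the Yoshihara-type estimate, Lemma \ref{g2b}, and the series $\sum_k k\beta_k^{1-2/q}$ is calibrated precisely for it); a $q$-th moment bound for a \emph{degenerate} $U$-statistic of a $\beta$-mixing sequence is a much stronger statement, is not a routine consequence of degeneracy plus covariance bounds, and is nowhere available under the stated hypotheses -- so as written the key maximal inequality for the remainder is unjustified. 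Moreover the extra machinery is unnecessary, and your remark that ``a union bound cannot work'' is misplaced: the paper's Lemma \ref{lemma1} reduces the H\"older norm to increments over dyadic grid points, and then Lemma \ref{H-lem:3} shows that the second-moment bound alone gives
$E\|n^{-3/2}\,\cU_{g,n}\|_\gamma^2\le C\,n^{-3}\sum_{j\le \log n}2^{2\gamma j}2^{j}\,n2^{-j}\,n\le C\,n^{2\gamma-1}\to 0$
for every $\gamma<1/2$; the $n^{-3/2}$ normalization makes the crude Chebyshev-over-dyadic-levels bound summable, so no higher moments and no chaining are needed. A second, minor slip: for the grid-versus-supremum comparison of the weighted functional you need $\max_i|h_1(X_i)|=o_P(n^{1/2-\gamma})$, not merely $o_P(\sqrt n)$; this does hold here because $h_1(X_1)\in L^r$ and $\gamma<1/2-1/r$, but your stated bound is too weak for $\gamma>0$. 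With the remainder handled by the second-moment/dyadic argument (or simply by quoting Lemmas \ref{lemma1} and \ref{H-lem:3}) and this correction, your proof closes and coincides in substance with the paper's.
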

For a bounded kernel $h$ the conditions (ii) on decay of the coefficients of absolute regularity reduces to 
\begin{itemize}
\item[(ii')] $\sum_k \max\{k, k^{r/2-1}\}\beta_k<\infty$ for some $r>2$.
\end{itemize}	 
Following Vogel and Wendler \cite{vogel2017}, $\sigma^2_\infty$ can be estimated using a kernel variance estimator. For this, define autocovariance estimators $\hat{\rho}(k)$ by
\begin{equation*}
\hat{\rho}(k)=\frac{1}{n}\sum_{i=1}^{n-k}\hat{h}_1(X_i)\hat{h}_1(X_{i+k})
\end{equation*}
with $\hat{h}_1(x)=\sum_{j=1}^nh(x,X_i)$. Then, for some Lipschitz continuous function $K$ with $K(0)=1$ and finite integral, we set
\begin{equation*}
\hat{\sigma}_{\infty}^2=\hat{\sigma}_h^2+2\sum_{k=1}^{n-1}K(k/b_n)\hat{\rho}(k),
\end{equation*}
where $b_n$ is a bandwidth such that $b_n\rightarrow\infty$ and $b_n/\sqrt{n}\rightarrow 0$ as $n\rightarrow \infty$.

With the help of the limit distribution and the variance estimators, we obtain critical values for our test statistic. Simulated quantiles for the limit distribution can be found in Section \ref{sec:crit}.

To discuss the behavior of the test statistics $T_{n}(\gamma, h)$ under the alternative we assume that for each $n\ge 1$ we have two probability measures $P_n$ and $Q_n$ on $(\eS, \cS)$ and a random sample $(X_{ni})_{1\le i\le n}$ such that for $k^*_n, \ell^*_n\in \{1, \dots, n\}$, 
$$
P_{X_{ni}}=\begin{cases} Q_n, \ \ &\textrm{for}\ \ i\in I^*:=\{k^*_n+1, \dots, k^*_n+\ell^*_n\}\\
P_n, \ \ &\textrm{for}\ \ i\in I_n\setminus I^*.
\end{cases}
$$
Set
$$
\delta_n=\int_{\eS}\int_{\eS} h(x, y)Q_n(dx)P_n(dy),\ \ \nu_n=\int_{\eS}\int_{\eS} (h(x, y)-\delta_n)^2Q_n(dx)P_n(dy).
$$
\begin{teo}\label{theoremC} Let $0\le \gamma<1$. Assume that for all $n\in\eN$, the random variables $X_{n1},\ldots,X_{nn}$ are independent and let $h$ be an antisymmetric kernel.
	If
\begin{equation}\label{cond:consis:1}
	\lim_{n\to\infty}\sqrt{n}\left|\delta_n\right|\Big[\frac{\ell^*_n}{n}\Big(1-\frac{\ell^*_n}{n}\Big)\Big]^{1-\gamma}=\infty
	\ \ \textrm{and}\ \ \sup_n \Big[\frac{\ell^*}{n}\Big(1-\frac{\ell^*}{n}\Big)\Big]^{1-2\gamma}\nu_n<\infty, 
	\end{equation}
then
\begin{equation}\label{consist:result}
n^{-3/2}T_n(\gamma, h)\pc \infty. 
\end{equation}
\end{teo}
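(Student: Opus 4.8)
The plan is to bound $T_n(\gamma,h)$ from below by the value of $\Delta_{h,n}$ at the \emph{true} changed segment and then to show that this lower bound diverges by a mean--variance (Chebyshev) argument. Set $p_n:=\ell^*_n/n$; one may assume $1\le\ell^*_n\le n-1$, as otherwise $I_n\setminus I^*$ is empty and the first part of \eqref{cond:consis:1} fails. Choosing $k=k^*_n$ and $m=k^*_n+\ell^*_n$ in the maximum \eqref{T:nh} makes $I(k,m)=I^*$, so
$$
n^{-3/2}T_n(\gamma,h)\ \ge\ \frac{n^{-3/2}\bigl|\Delta_{h,n}(k^*_n,k^*_n+\ell^*_n)\bigr|}{\rho_\gamma(p_n)}\ =:\ |Y_n|,
$$
and it suffices to prove $|Y_n|\pc\infty$. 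Because the $X_{ni}$ are independent with $X_{ni}\sim Q_n$ for $i\in I^*$ and $X_{ni}\sim P_n$ otherwise, $\Delta_{h,n}(k^*_n,k^*_n+\ell^*_n)=\sum_{i\in I^*}\sum_{j\in I_n\setminus I^*}h(X_{ni},X_{nj})$ is a two-sample sum of $\ell^*_n(n-\ell^*_n)$ terms, each with mean $\delta_n$ and variance $\nu_n$, in which the two index-groups are independent of each other.

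First I would compute the mean. By independence, $\eE\,\Delta_{h,n}(k^*_n,k^*_n+\ell^*_n)=\ell^*_n(n-\ell^*_n)\delta_n=n^2p_n(1-p_n)\delta_n$, so
$$
\eE Y_n=\frac{n^{-3/2}\,n^2p_n(1-p_n)\delta_n}{[p_n(1-p_n)]^{\gamma}}=\sqrt{n}\,\delta_n\,[p_n(1-p_n)]^{1-\gamma},
$$
and the first part of \eqref{cond:consis:1} is precisely the statement that $|\eE Y_n|\to\infty$.

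Next I would bound the variance. Writing $\var\bigl(\sum_{i\in I^*}\sum_{j\in I_n\setminus I^*}h(X_{ni},X_{nj})\bigr)$ as a double sum of covariances of the summands, observe that two summands with no common index are independent and so contribute nothing, while each summand has variance $\nu_n$ and, by Cauchy--Schwarz, each covariance is at most $\nu_n$ in absolute value. The number of ordered pairs of summands that share at least one index is $\ell^*_n(n-\ell^*_n)(n-1)$, so
$$
\var\bigl(\Delta_{h,n}(k^*_n,k^*_n+\ell^*_n)\bigr)\ \le\ n\,\ell^*_n(n-\ell^*_n)\,\nu_n\ =\ n^{3}p_n(1-p_n)\,\nu_n,
$$
whence $\var(Y_n)\le [p_n(1-p_n)]^{1-2\gamma}\nu_n$, which by the second part of \eqref{cond:consis:1} is bounded by some $C<\infty$ uniformly in $n$. (Alternatively this follows from the two-sample Hoeffding decomposition of $h$, whose three orthogonal centred components have variances summing to $\nu_n$.)

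Finally, fix $M>0$. For all $n$ large enough that $|\eE Y_n|>M$, Chebyshev's inequality gives
$$
P\bigl(n^{-3/2}T_n(\gamma,h)\le M\bigr)\ \le\ P\bigl(|Y_n|\le M\bigr)\ \le\ P\bigl(|Y_n-\eE Y_n|\ge|\eE Y_n|-M\bigr)\ \le\ \frac{\var(Y_n)}{(|\eE Y_n|-M)^{2}}\ \le\ \frac{C}{(|\eE Y_n|-M)^{2}},
$$
which tends to $0$ as $n\to\infty$ since $|\eE Y_n|\to\infty$; as $M$ was arbitrary, this is \eqref{consist:result}. The only genuine computation is the variance estimate, whose exponent $1-2\gamma$ has been matched to the second condition in \eqref{cond:consis:1} just as the mean has been matched to the first. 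I expect the only mildly delicate points to be the bookkeeping of which covariance terms vanish and — since $1-2\gamma$ is negative for $\gamma>1/2$, so that $[p_n(1-p_n)]^{1-2\gamma}$ may be large — the observation that the second condition of \eqref{cond:consis:1} is exactly what is needed to keep $\var(Y_n)$ bounded.
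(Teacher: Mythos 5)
Your proof is correct and follows essentially the same route as the paper: you lower-bound $T_n(\gamma,h)$ by the contribution of the true segment (this is the paper's Lemma \ref{lem-consistency}) and then bound $E\bigl(\sum_{i\in I^*}\sum_{j\in I_n\setminus I^*}[h(X_{ni},X_{nj})-\delta_n]\bigr)^2$ by $n\ell^*_n(n-\ell^*_n)\nu_n$ via the same index-sharing count and Cauchy--Schwarz, exactly as in the paper's proof of Theorem \ref{theoremC}. Your explicit Chebyshev step is just the unpacked proof of that lemma, so there is no substantive difference.
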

For dependent random variables, we get a similar theorem:
\begin{teo}\label{theoremD} Assume that for all $n\in\eN$, the random variables $X_{n1},\ldots,X_{nn}$ are absolutely regular with mixing coefficients $(\beta_k)_{k\in\eN}$ not depending on $n$, such that $\sum_{k=1}^{\infty}k^{q/2}\beta^{1/2-1/q}_k<\infty$
for some $q>2$. Let $h$ be an antisymmetric kernel, such that there exist $C_r<\infty$ such that $E[|h(X_{in},X_{jn})|^{q}]\le C_q$ for all $n\in\eN$, $i,j\leq n$. Furthermore, let $0\le \gamma<1$ and assume that 
\begin{equation}\label{cond:consis:2}
	\lim_{n\to\infty}\sqrt{n}\left|\delta_n\right|\Big[\frac{\ell^*_n}{n}\Big(1-\frac{\ell^*_n}{n}\Big)\Big]^{1-\gamma}=\infty.
	\end{equation}
Then (\ref{consist:result}) holds. 
\end{teo}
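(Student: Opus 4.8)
The plan is to bound $T_n(\gamma,h)$ from below by evaluating the heterogeneity statistic on the \emph{true} changed segment and to show that this lower bound already diverges. With $t_n:=\ell^*_n/n$,
$$
n^{-3/2}T_n(\gamma,h)\;\ge\;\frac{\bigl|\Delta_{h,n}(k^*_n,k^*_n+\ell^*_n)\bigr|}{n^{3/2}\rho_\gamma(t_n)}\;=:\;\bigl|\mu_n+R_n\bigr| ,
$$
where $\mu_n:=n^{-3/2}\rho_\gamma(t_n)^{-1}E\bigl[\Delta_{h,n}(k^*_n,k^*_n+\ell^*_n)\bigr]$ is deterministic and $R_n$ is the centred remainder. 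I would then treat the mean and the fluctuation separately. No symmetry reduction to $\ell^*_n\le n/2$ is needed, since all quantities below depend on $\ell^*_n$ and $n-\ell^*_n$ only through $\ell^*_n(n-\ell^*_n)$ and $\rho_\gamma(t_n)=[t_n(1-t_n)]^\gamma$.

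First the mean. Since $X_{ni}\sim Q_n$ for $i\in I^*$ and $X_{nj}\sim P_n$ for $j\notin I^*$, one has $E[\Delta_{h,n}(k^*_n,k^*_n+\ell^*_n)]=\sum_{i\in I^*}\sum_{j\notin I^*}E[h(X_{ni},X_{nj})]$. Using the coupling characterisation of absolute regularity together with the uniform moment bound $\sup_{i,j,n}E|h(X_{ni},X_{nj})|^q\le C_q$ — a truncation of $h$ at a level $L\to\infty$ makes the passage from bounded to unbounded kernels routine — one gets $|E[h(X_{ni},X_{nj})]-\delta_n|\lesssim\beta_{|i-j|}^{1-1/q}$; since $1-1/q\ge 1/2-1/q$ the hypothesis forces $\sum_k\beta_k^{1-1/q}<\infty$, so summing over $j$ and then over $i$ bounds the total bias by $O(\min(\ell^*_n,n-\ell^*_n))=O(nt_n(1-t_n))$. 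Hence $|\mu_n|=\sqrt n\,|\delta_n|\,[t_n(1-t_n)]^{1-\gamma}\,(1+o(1))$, the bias contributing only $O(n^{-1/2}[t_n(1-t_n)]^{1-\gamma})\to 0$ because $t_n(1-t_n)\le 1/4$ and $\gamma<1$; by (\ref{cond:consis:2}), $|\mu_n|\to\infty$.

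Next the fluctuation: $E[R_n^2]=n^{-3}\rho_\gamma(t_n)^{-2}\var\bigl(\Delta_{h,n}(k^*_n,k^*_n+\ell^*_n)\bigr)$. Via a Hoeffding-type decomposition of the two-sample $U$-statistic (constant, two linear parts, one canonical part) and moment/covariance inequalities for absolutely regular sequences, I expect $\var\bigl(\Delta_{h,n}(k^*_n,k^*_n+\ell^*_n)\bigr)=O\bigl(n\,\ell^*_n(n-\ell^*_n)\bigr)$ — the linear parts jointly contribute $O(n\,\ell^*_n(n-\ell^*_n))$ and the canonical part $O(\ell^*_n(n-\ell^*_n))$, and the assumption $\sum_k k^{q/2}\beta^{1/2-1/q}_k<\infty$ is exactly what keeps the covariances of the canonical part summable. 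This yields $E[R_n^2]=O([t_n(1-t_n)]^{1-2\gamma})$, and a short computation with (\ref{cond:consis:2}) shows $E[R_n^2]=o(\mu_n^2)$ whenever $\gamma\le 1/2$ (more generally, whenever $n\,t_n(1-t_n)\,\delta_n^2\to\infty$). In that case Chebyshev's inequality gives $R_n=o_P(|\mu_n|)$, hence $|\mu_n+R_n|\ge|\mu_n|(1-o_P(1))\pc\infty$, i.e.\ (\ref{consist:result}). This is the same mechanism as in Theorem~\ref{theoremC}, where the extra condition $\sup_n[t_n(1-t_n)]^{1-2\gamma}\nu_n<\infty$ is precisely what forces the mean to dominate.

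The step I expect to be the main obstacle is the remaining regime, which — as one checks directly from (\ref{cond:consis:2}) — can occur only when $\gamma>1/2$ and $t_n(1-t_n)\to 0$: there $R_n$ need not be negligible against $\mu_n$, so the crude bound $|\Delta|\ge|E\Delta|-|\Delta-E\Delta|$ is no longer decisive. I would close it by a limit theorem for (triangular-array, two-sample) $U$-statistics of absolutely regular sequences — the Lyapunov-type condition following from the $q$-th moment bound — giving $R_n/s_n\wcn\zeta$ with $s_n^2:=E[R_n^2]$, where $\zeta$ is an atomless limit law (a standard normal when the linear part does not degenerate). In this regime $s_n\to\infty$ while $\mu_n/s_n$ stays bounded, so along any subsequence on which $\mu_n/s_n$ converges, $\mu_n/s_n+R_n/s_n$ converges to a translate of $\zeta$, which has no atom at $0$; hence for every $M>0$,
$$
P\bigl(n^{-3/2}T_n(\gamma,h)\le M\bigr)\;\le\;P\bigl(|\mu_n+R_n|\le M\bigr)\;=\;P\bigl(|\mu_n/s_n+R_n/s_n|\le M/s_n\bigr)\;\longrightarrow\;0
$$
since $M/s_n\to 0$. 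Carrying the $\beta$-mixing moment inequalities through the Hoeffding decomposition, both for the variance bound and for this central limit theorem, under only the stated polynomial decay of $(\beta_k)$, is the technically heaviest part.
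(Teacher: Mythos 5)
Your core argument is the paper's: the paper lower-bounds the statistic on the true segment via Lemma \ref{lem-consistency}, where the drift term diverges by (\ref{cond:consis:2}) and the centred term is controlled through exactly the second-moment bound you anticipate, $E\big(\sum_{i\in I^*}\sum_{j\in I_n\setminus I^*}[h(X_{ni},X_{nj})-\delta_n]\big)^2\le C\,n\,\ell^*_n(n-\ell^*_n)$. This bound is obtained from a Hoeffding decomposition adjusted to the two distributions ($h_{1,n}$, $h_{2,n}$, $g_n$), with the two linear parts handled by the Yokoyama-type moment inequality (Lemma \ref{beta:Ros}) and the degenerate part by the Yoshihara-type covariance estimates (Lemma \ref{g2b}); these are precisely the estimates (\ref{ConAlt1})--(\ref{ConAlt3}). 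Your separate treatment of the bias $Eh(X_{ni},X_{nj})-\delta_n$ is harmless but not needed in this bookkeeping, since the decomposition is exact and only second moments of the three pieces enter. So in the regime where Chebyshev decides --- in particular for every $\gamma\le 1/2$, as you compute --- your proposal coincides with the paper's proof.

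The extra regime you introduce, $\gamma\in(1/2,1)$ with $\ell^*_n(n-\ell^*_n)/n^2\to 0$, is where your write-up has a genuine gap. The anti-concentration argument is only sketched and, as stated, does not close: the assertion that there $s_n\to\infty$ while $\mu_n/s_n$ stays bounded is unsupported (you only have an upper bound on $s_n^2=ER_n^2$, so at best a subsequence dichotomy is available, and on the critical subsequences you additionally need a lower bound on $s_n$); atomlessness of the limit $\zeta$ requires non-degeneracy of the projections $h_{1,n},h_{2,n}$, which is nowhere assumed; and the central limit theorem for triangular arrays of two-sample $U$-statistics of absolutely regular observations with $n$-dependent distributions, on which the argument leans, is itself a substantial unproven ingredient not contained in the paper. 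For comparison, the paper does not supply this step either: it deduces (\ref{cons:eq:1}) directly from the variance bound, which after normalization gives $ER_n^2\le C\,[\ell^*_n(n-\ell^*_n)/n^2]^{1-2\gamma}$ and hence is conclusive only when this quantity stays bounded (automatic for $\gamma\le 1/2$, or when $\ell^*_n(n-\ell^*_n)\asymp n^2$). So your observation that something more is needed for $\gamma>1/2$ is sound and in fact points at a lacuna in the paper's own argument, but the CLT patch you propose is not yet a proof of that case.
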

This implies that a test based on statistic $T_n(\gamma, h)$ is consistent. More on consistency see Section \ref{sec:alt}. The proofs of Theorems \ref{theoremA} and \ref{theoremB} are given in Section \ref{sec:null}. 

\section{Simulation results}\label{sec:simu}

We compare the CUSUM type and the Wilcoxon type test statistic in a Monte Carlo simulation study. The model is an autoregressive process $(Y_n)_{n\in\eN}$ of order 1 with $Y_i=aY_{i-1}+\epsilon_i$, where $(\epsilon_i)_{i\in\eN}$ are either normal distributed, exponential distributed or $t_5$ distributed. We assume that the first $L$ observations are shifted, so that we observe
\begin{equation*}
X_i:=\begin{cases}Y_i/\sqrt{\var(Y_i)}+\delta_n\ &\text{for }i=1,\ldots,L\\Y_i/\sqrt{\var(Y_i)} \ &\text{for }i=L+1,\ldots,n\end{cases}
\end{equation*}
Under independence, the distribution of the change-point statistics does not dependent on the beginning of the changed segment, only on the length. In Table \ref{tab2}, we show some simulation results comparing the power for a changed segment in the beginning of the data and in the middle for a dependent sequence (autoregressive parameter $a=0.5$). The rejection frequencies do not differ much, so we restrict further simulations to segments of the form $I^\star=\{1,\ldots,L\}$.

\begin{table}[h]
\caption{Empirical rejection frequency under alternative for an AR(1)-process of length $N=480$ with AR-parameter $0.5$ and $t_5$ distributed-innovations, changed segment from 1 to 160 or from 161 to 320, change height $\delta_n=0.58$, level $\alpha=5\%$.}\label{tab2}
\begin{tabular}{|l||c|c|c|c|c|}
\hline 
 & \quad\quad\quad\quad\quad & \quad\quad\quad\quad\quad & \quad\quad\quad\quad\quad & \quad\quad\quad\quad\quad & \quad\quad\quad\quad\quad \\[-3mm]
 & $\gamma=0$ & $\gamma=0.1$ & $\gamma=0.2$ & $\gamma=0.3$ & $\gamma=0.4$\\[1mm]
\hline
\hline
CUSUM & & & & & \\
start=1, end=160 & 0.791 & 0.794 & 0.794 & 0.786 & 0.747\\
start=161, end=320 & 0.780 & 0,783 & 0.782 & 0.782 & 0,742\\[1mm]
\hline
Wilcoxon & & & & & \\
start=1, end=160 & 0.853 & 0.859 & 0.858 & 0.852 & 0.805\\
start=161, end=320 & 0,842 & 0,844 & 0,843 & 0,840 & 0,802\\[1mm]
\hline 
\end{tabular}
\end{table}

In Figure \ref{figpower1}, the results for $n=240$ independent observations ($a=0$) are shown. In this case, we use the known variance of our observations and do not estimate the variance. The relative rejection frequency of 3,000 simulation runs under the alternative is plotted against the relative rejection frequency under the hypothesis for theoretical significance levels of 1\%, 2.5\%, 5\% and 10\%. As expected, the CUSUM test has a better performance than the Wilcoxon test for normal distributed data. For the exponential and the $t_5$ distribution, the Wilcoxon type test has higher power. For the long changed segment ($L=80$), the weighted tests with $\gamma=0.1$ outperform the tests with $\gamma=0.3$. For the short changed segment ($L=30$), the Wilcoxon type test has more power with weight $\gamma=0.3$. The same holds for the CUSUM type test under normality. For the other two distributions however, the empirical size is also higher for $\gamma=0.3$ so that the size corrected power is not improved.

In Figure \ref{figpower2}, we show the results for $n=480$ dependent observations (AR(1) with $a=0.5$). In this case, we estimated the long run variance with a kernel estimator, using the quartic spectral kernel and the fixed bandwidth $b=4$. Both tests become too liberal now with typical rejection rates of 13\% to 15\% for a theoretical level of 10\%. For the long changed segment ($L=160$) it is better to use the weight $\gamma=0.1$, for the short segment ($L=60$) the weight $\gamma=0.3$. Under normality, the CUSUM type test has a better performance, though the difference in power is not very large. For the other two distributions, the Wilcoxon type test has a better power.

In practice, the strength of dependence is usually not known beforehand, so it would make sense to use a data-adaptive bandwidth for the variance estimation. However, the bias of the variance estimator under the alternative might get worse for data-adaptive bandwidths, and this might lead to a nonmonotonic power of change-point tests, see {e.g.} Vogelsang \cite{vogelsang1999} or Shao and Zhang \cite{shao2010}. For this reason, we propose to estimate the variance the following way: Split the data set into five shorter parts of equal length and use a variance estimator with data-adaptive bandwidth separately for each of the parts. Then take the median of the five estimators for standardizing the test statistic. The beginning and the end of the changed segment will only affect at most two of the parts, so we have at least three estimates not affected. In the simulations in Figure \ref{figpower3}, we study again an AR(1)-process and use the standard setting of the \textbf{R} function \texttt{lvar} for the data-adaptive choice of the bandwidths in the five parts. With this method, we do not observe a loss of power compared to the fixed bandwidth. Under the hypothesis, the all tests become strongly oversized. The Wilcoxon type test statistic clearly outperforms the CUSUM type statistic for nonnormal in innovations.

\begin{figure}
\includegraphics[width=0.9\textwidth]{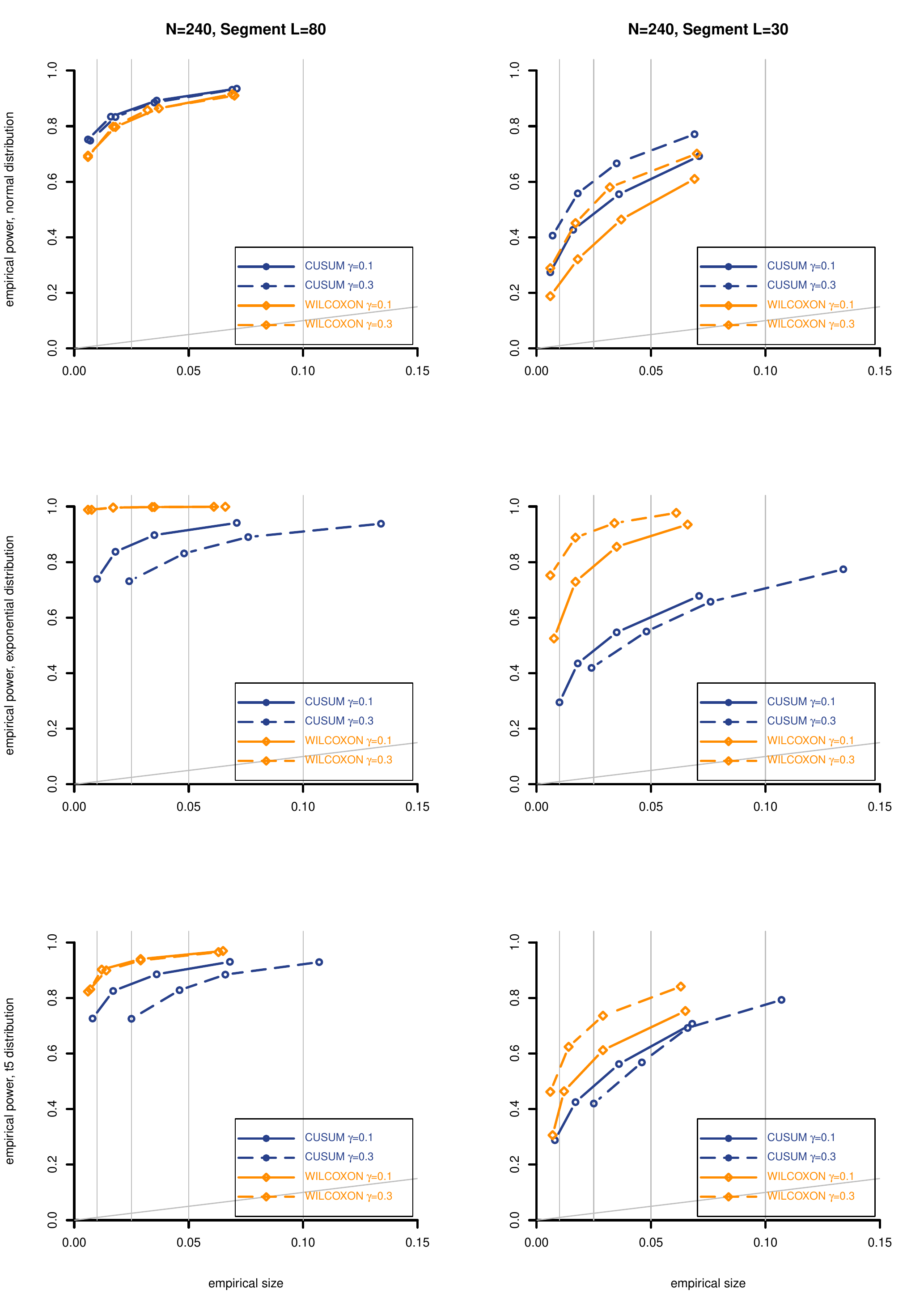} 
\caption{Rejection frequency under alternative versus rejection frequency under the hypothesis for $N=240$ independent observations using the true variance, normal (upper panels), exponential (middle panels) or $t_5$ distribution (lower panels) with change segment of length $L=80$ and height $\delta_n=0.58$ (left panels), changed segment of length $L=30$ and height $\delta_n=0.78$ (right panels), for the CUSUM type test (\textcolor{royalblue4}{$\circ$}) and for the Wilcoxon type test (\textcolor{darkorange}{$\diamond$}) with $\gamma=0.1$ (solid line) or $\gamma=0.3$ (dashed line)}
\label{figpower1}
\end{figure}

\begin{figure}
\includegraphics[width=0.9\textwidth]{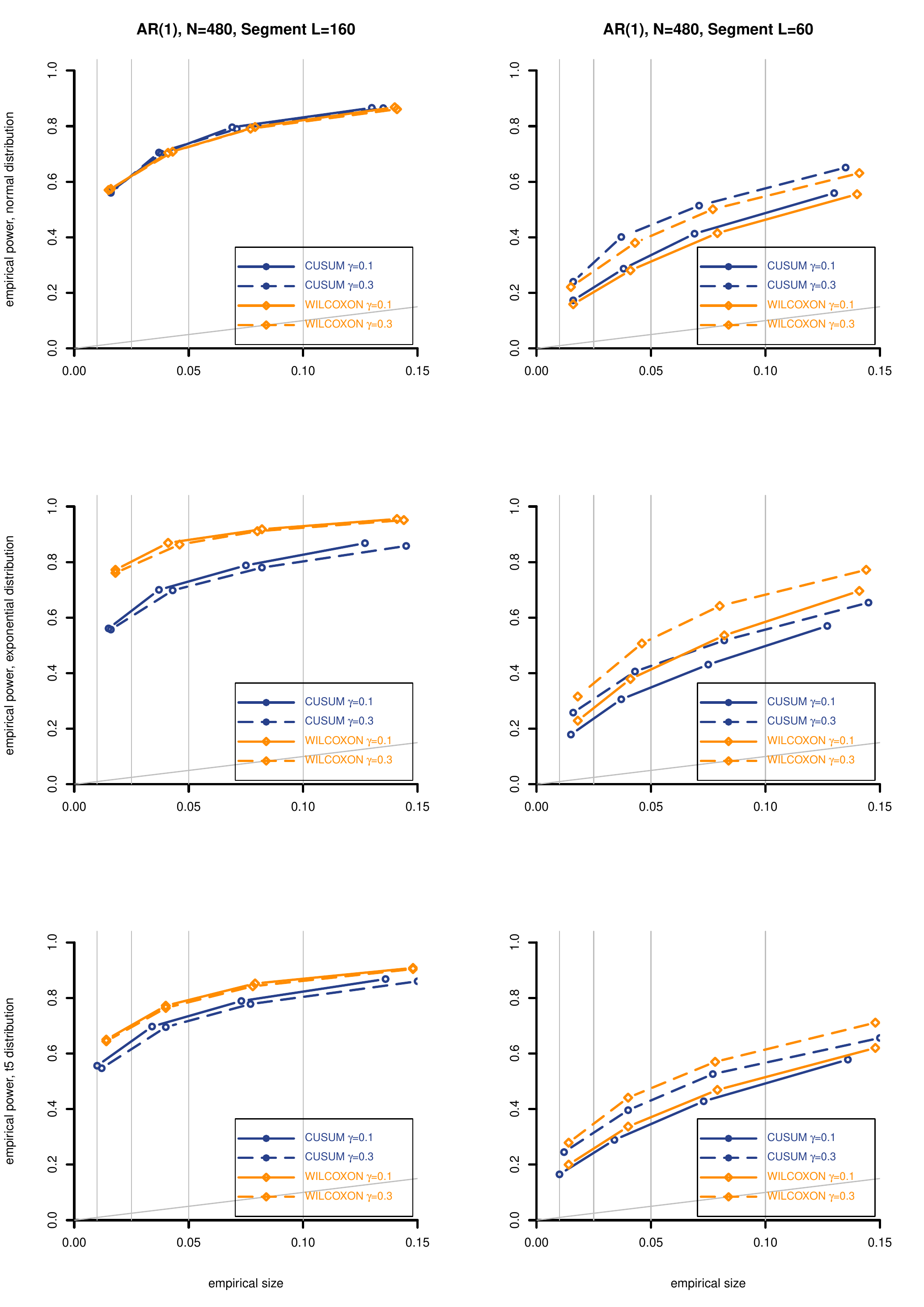} 
\caption{Rejection frequency under alternative versus rejection frequency under the hypothesis for an AR(1)-process of length $N=480$ using an estimated variance (fixed bandwidth $b_n=4$), normal (upper panels), exponential (middle panels) or $t_5$ distribution (lower panels) with changed segment of length $L=160$ and height $\delta_n=0.58$ (left panels), change segment of length $L=60$ and height $\delta_n=0.78$ (right panels), for the CUSUM type test (\textcolor{royalblue4}{$\circ$}) and for the Wilcoxon type test (\textcolor{darkorange}{$\diamond$}) with $\gamma=0.1$ (solid line) or $\gamma=0.3$ (dashed line)}
\label{figpower2}
\end{figure}

\begin{figure}
\includegraphics[width=0.9\textwidth]{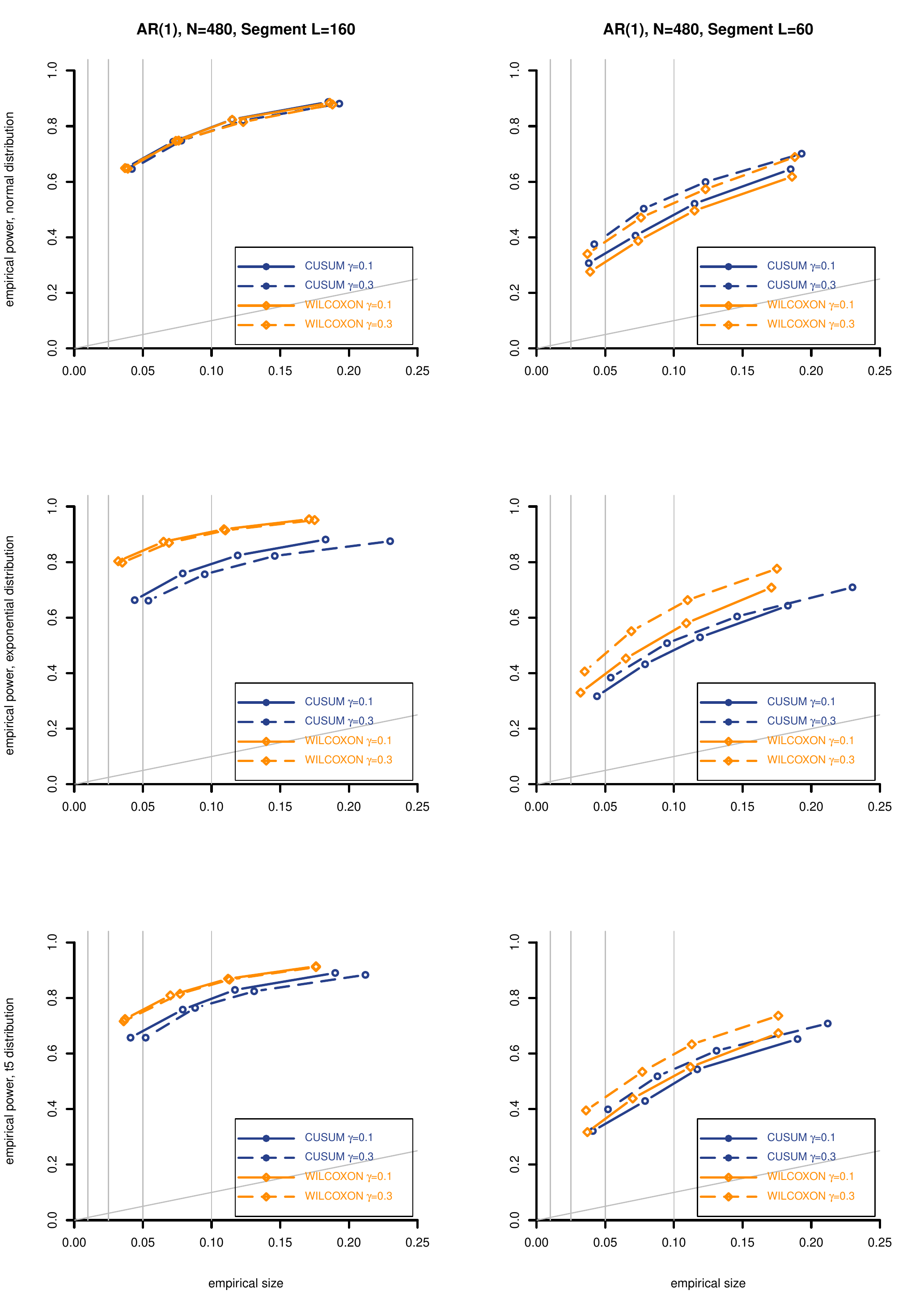} 
\caption{Rejection frequency under alternative versus rejection frequency under the hypothesis for an AR(1)-process of length $N=480$ using the median of five variance estimates with \emph{data-adaptive bandwidth}, normal (upper panels), exponential (middle panels) or $t_5$ distribution (lower panels) with changed segment of length $L=160$ and height $\delta_n=0.58$ (left panels), length $L=60$ and height $\delta_n=0.78$ (right panels), for the CUSUM type test (\textcolor{royalblue4}{$\circ$}) and for the Wilcoxon type test (\textcolor{darkorange}{$\diamond$}) with $\gamma=0.1$ (solid line) or $\gamma=0.3$ (dashed line)}
\label{figpower3}
\end{figure}

Another problem in many practical applications is the unknown length of the changed segment, so that it is difficult to choose the value $\gamma\in[0,1/2)$ to achieve the optimal power. If there is no a-priori knowledge of the typical length of an epidemic change, it would also be possible to use the maximum of (suitable standardized) test statistics for different values of $\gamma$. Another straightforward application of Theorem \ref{teo:mixing} leads to the asymptotic distribution of this combined test statistic and critical values could be obtained via simulations, but this goes beyond the scope of this paper.

\section{Data example}\label{sec:data}

We investigate the frequency of search for the term `Harry Potter' from January 2004 until February 2019 obtained from Google trends. The time series is plotted in Figure \ref{figdata}. We apply the CUSUM type and the Wilcoxon type change-point test with weight parameters $\gamma\in\{0,0.1,...,0.4\}$. The lag one autocovariance is estimated as 0.457, so that we have to allow for dependence in our testing procedure. We estimate the long run variance with a kernel estimator, using the quartic spectral kernel and the fixed bandwidth $b=4$.

The CUSUM type test does not reject the hypothesis of stationarity for a significance level of 5\%, regardless of the choice of $\gamma$. In contrast, the Wilcoxon type test detects a changed segment for any $\gamma\in\{0,0.1,...,0.4\}$, even at a significance level of 1\%. The beginning and end of the changed segment are estimated differently for different values of $\gamma$: The unweighted Wilcoxon type test with $\gamma=0$ leads to a segment from January 2008 to June 2016. For $\gamma=0.1,0.2,0.3$, we obtain January 2012 to June 2016 as an estimate. $\gamma=0.4$ leads to an estimated changed segment from January 2012 to May 2016.

By visual inspection of the time series, we come to the conclusion that the estimated changed segment for values $\gamma\geq 0.1$ fits the data better, because this segment coincides with a period with only low frequencies of search. Furthermore, the spikes of this time series can be explained by the release of movies, and the estimated changed segment is between the release of the last harry potter movie in July 2011 and the release of `Fantastic Beasts and Where to Find Them' in November 2016.

\begin{figure}
\includegraphics[width=\textwidth]{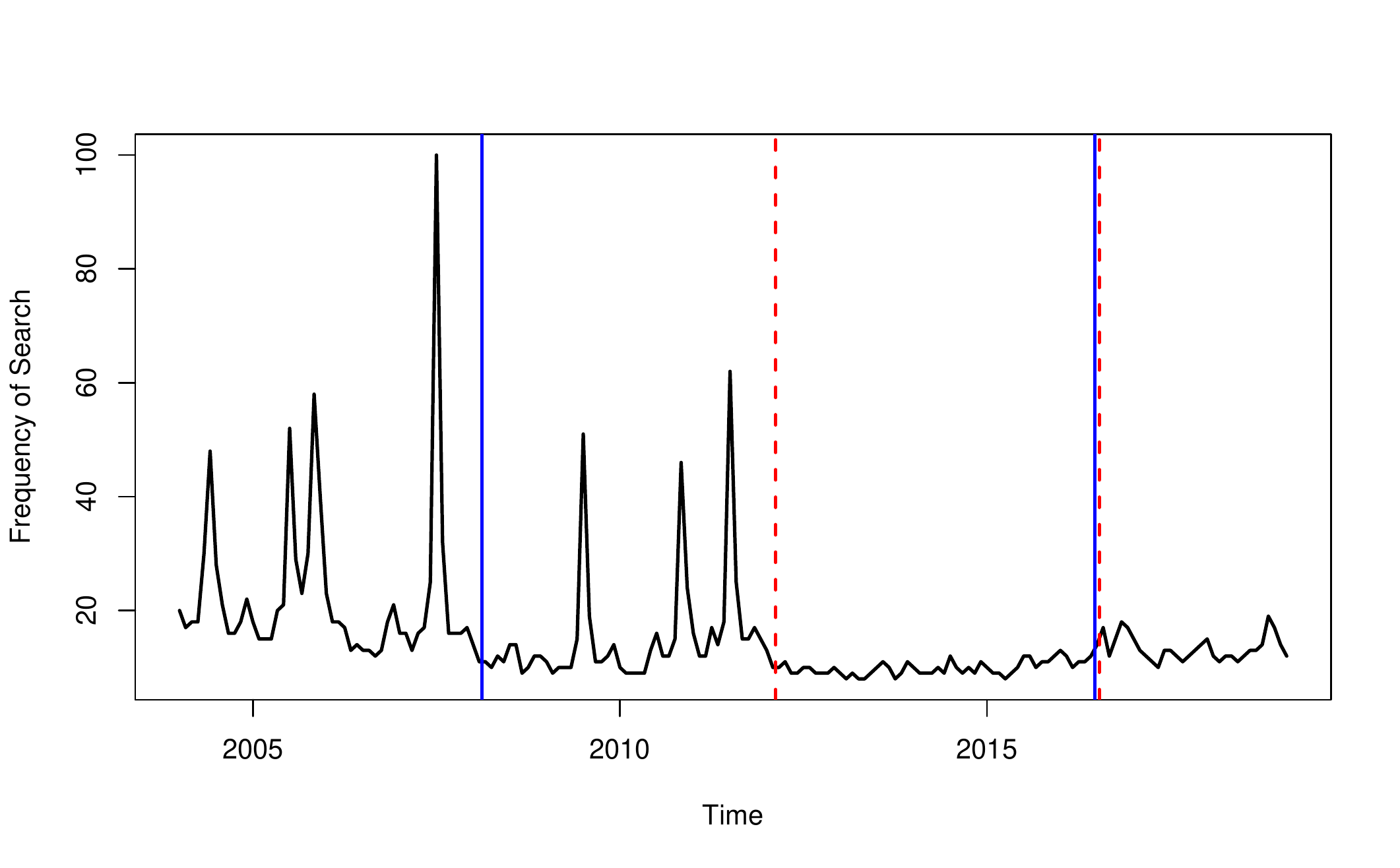} 
\caption{Frequency of search queries for `\emph{harry potter}' obtained from Google trends: CUSUM type statistic does not detect a change for any $\gamma\in\{0,0.1,0.2,0.3,0.4\}$. The Changed segment detected by the Wilcoxon type statistic with $\gamma=0$ is indicated by blue solid line, changed segment detected for $\gamma\in\{0.1,0.2,0.3\}$ by red dashed line.}
\label{figdata}
\end{figure}

\section{Double partial sum process}\label{sec:process}

Throughout this section we assume that the sequence $(X_i)$ is stationary and $P_X:=P_{X_i}$ is the distribution of each $X_i$. 
Consider for a kernel $h:\eS\times\eS\to\eR$ the double partial sums
$$
U_{h,0}=U_{h,n}=0, \ \ U_{h,k}=\sum_{i=1}^k\sum_{j=k+1}^n h(X_i, X_j),\ \  1\le k<n
$$
and the corresponding \emph{polygonal line process} $\cU_{h,n}=(\cU_{h,n}(t), t\in [0, 1])$ defined by
\begin{equation}\label{pol-line}
\cU_{h,n}(t):=U_{h,\flo{nt}}+(nt-[nt])(U_{h,\flo{nt}+1}-U_{h,\flo{nt}}),\ \ t\in [0, 1],
\end{equation}
where for a real number $a\geq 0$, $\lfloor a\rfloor:=\max\{k\colon\, k\in\eN,\,k\leq x\}$, $\eN=\{0,1,\dots\}$, is a value of the floor function.
So $\cU_{h,n}=(\cU_{h,n}(t), t\in [0, 1])$, is a random polygonal line with vertexes $(U_{h, k}, k/n)$, $k=0, 1, \dots, n$. As a functional framework for the process $\cU_{h,n}$ we consider Banach spaces of H\"{o}lder functions. Recall the space $C[0, 1]$ of continuous functions on $[0, 1]$ is endowed with the norm
$$
||x||=\max_{0\le t\le 1}|x(t)|.
$$
The H\"{o}lder space $\cH_\gamma^{o}[0, 1], 0\le\gamma<1$, of functions $x\in C[0, 1]$ such that
$$
\omega_\gamma(x, \delta):=\sup_{0<|s-t|\le \delta}\frac{|x(t)-x(s)|}{|t-s|^\gamma}\to 0\ \ \textrm{as}\ \ \delta\to 0,
$$ 
is endowed with the norm
$$
||x||_\gamma:=|x(0)|+\omega_\gamma(x, 1).
$$
Both $C[0, 1]$ and $\cH^o_\gamma[0, 1]$ are separable Banach spaces. The space $\cH^o_0[0, 1]$ is isomorphic to $C[0, 1]$.

\begin{definition} For a kernel $h$ and a number $0\le \gamma<1$ we say that $(X_i)$ satisfies $(h, \gamma)$-FCLT if there is a Gaussian process $\cU_h=(\cU_h(t), t\in [0, 1])\in \cH^o_\gamma[0, 1]$, such that 
	$$
	n^{-3/2}\cU_{h, n}\wcn \cU_h
	\ \ \textrm{in the space}\ \  \cH_\gamma^o[0, 1].
	$$ 	 
\end{definition}
In order to make use of results for partial sum processes, we decompose the $U$-statistics into a linear part and a so-called degenerate part. Hoeffding's decomposition of the kernel $h$  reads 
$$
h(x, y)=h_1(x) - h_1(y) + g(x, y),\ \ x, y\in \eS,
$$
where
$$
h_1(x)=\int_{\eS} h(x, y)P_X(\dd y),\ \ \textrm{and}\ \ g(x, y)=h(x, y)-h_1(x)+h_1(y), \ \  x, y\in \eS, 
$$
and leads to the splitting
\begin{equation}\label{H-eq:1}
\cU_{h, n}(t)=n[W_{h_1,n}(t)-tW_{h_1, n}(1)]+\cU_{g, n}(t),\ \ t\in [0, 1], 
\end{equation}
where  
$$
W_{h_1,n}(t)=\sum_{i=1}^{\flo{nt}}h_1(X_i)+
(nt-\flo{nt})h_1(X_{\flo{nt}+1}),\ \ t\in [0, 1],
$$
is the polygonal line process defined by partial sums of random variables $(h_1(X_i))$. 
Decomposition (\ref{H-eq:1}) reduces $(h, \gamma)$-FCLT to H\"olderian invariance principle for random variables $(h_1(X_i))$ via the following lemma.

\begin{lem}\label{H-lem:3} If there exists a constant $C>0$ 
such that for any integers $0\le k<m\le n$
\begin{equation}\label{cond:2}
E(U_{g,m}-U_{g,k})^2\le C(m-k)(n-(m-k))
\end{equation}
then 
$$
||n^{-3/2}\cU_{g, n}||_{\gamma}=o_P(1)
$$
for any $0\le \gamma<1/2$.
\end{lem}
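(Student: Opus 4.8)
The plan is to bound the H\"older seminorm of $n^{-3/2}\cU_{g,n}$ through the classical dyadic (Schauder/Faber--Schauder) description of H\"older norms, and then to estimate the resulting dyadic second differences level by level using the second--moment bound~\eqref{cond:2}. First observe that $\cU_{g,n}(0)=U_{g,0}=0$ and $\cU_{g,n}(1)=U_{g,n}=0$, the latter because the inner sum defining $U_{g,n}$ is empty; hence $\|n^{-3/2}\cU_{g,n}\|_\gamma=n^{-3/2}\omega_\gamma(\cU_{g,n},1)$, and since $\cU_{g,n}$ is piecewise linear it lies in $\cH^o_\gamma[0,1]$, so the claim is meaningful. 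For $j\ge1$ and $1\le k\le2^{j-1}$ write the dyadic second difference
\[
\lambda_{j,k}(x):=x\!\Big(\tfrac{2k-1}{2^{j}}\Big)-\tfrac12\,x\!\Big(\tfrac{k-1}{2^{j-1}}\Big)-\tfrac12\,x\!\Big(\tfrac{k}{2^{j-1}}\Big),\qquad \lambda_j(x):=\max_{1\le k\le2^{j-1}}|\lambda_{j,k}(x)|.
\]
By Ciesielski's characterization of the H\"older norm via the Schauder basis there is a constant $C_\gamma$ with $\omega_\gamma(x,1)\le C_\gamma\sup_{j\ge1}2^{\gamma j}\lambda_j(x)$ for every $x\in C[0,1]$ such that $x(0)=x(1)=0$; so it suffices to show that for each fixed $\e>0$,
\[
P\Big(\sup_{j\ge1}2^{\gamma j}\lambda_j(\cU_{g,n})>\e\,n^{3/2}\Big)\longrightarrow 0\qquad(n\to\infty).
\]
I split the supremum at $J_n:=\lceil\log_2 n\rceil$, noting $n\le 2^{J_n}\le 2n$.

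The low levels $1\le j\le J_n$ carry the real content. By piecewise linearity, $\cU_{g,n}(s)$ is a convex combination of $U_{g,\lfloor ns\rfloor}$ and $U_{g,\lfloor ns\rfloor+1}$, so $\lambda_{j,k}(\cU_{g,n})$ is a linear combination, with coefficients summing to $0$ and of total absolute value at most $2$, of at most six of the values $U_{g,p}$ whose indices $p$ lie in a window of length $O(n2^{-j})$. Because the coefficients sum to zero, $|\lambda_{j,k}(\cU_{g,n})|\le 2\max|U_{g,p}-U_{g,q}|$ over a set of at most six pairs $(p,q)$, each with $|p-q|\le c\,n2^{-j}$ for an absolute constant $c$ (the restriction $j\le J_n$, i.e.\ $n2^{-j}\ge\tfrac12$, absorbs the additive term). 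Taking the maximum over the $2^{j-1}$ indices $k$, the event $\{2^{\gamma j}\lambda_j(\cU_{g,n})>\e n^{3/2}\}$ is contained in a union of at most $c'2^{j}$ events $\{|U_{g,p}-U_{g,q}|>\tfrac12\e n^{3/2}2^{-\gamma j}\}$ with $|p-q|\le c\,n2^{-j}$, and by~\eqref{cond:2} each such increment has second moment at most $C|p-q|\,(n-|p-q|)\le c\,C\,n^{2}2^{-j}$. Markov's inequality and a union bound therefore give
\[
P\big(2^{\gamma j}\lambda_j(\cU_{g,n})>\e n^{3/2}\big)\ \le\ c'2^{j}\cdot\frac{c\,C\,n^{2}2^{-j}}{\big(\tfrac12\e\,n^{3/2}2^{-\gamma j}\big)^{2}}\ =\ \frac{C_1}{\e^{2}}\,n^{-1}2^{2\gamma j},
\]
so that, using $\sum_{j\le J_n}2^{2\gamma j}\le J_n2^{2\gamma J_n}$ and $2^{J_n}\le 2n$,
\[
\sum_{j=1}^{J_n}P\big(2^{\gamma j}\lambda_j(\cU_{g,n})>\e n^{3/2}\big)\ \le\ \frac{C_1}{\e^{2}}\,n^{-1}\sum_{j=1}^{J_n}2^{2\gamma j}\ \le\ \frac{C_2}{\e^{2}}\,n^{2\gamma-1}\log n\ \longrightarrow\ 0,
\]
because $\gamma<1/2$.

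The high levels $j>J_n$ are handled crudely: $\cU_{g,n}$ is globally Lipschitz with constant $nM_n$, where $M_n:=\max_{0\le a<n}|U_{g,a+1}-U_{g,a}|$, so $\lambda_j(\cU_{g,n})\le 2^{-j}nM_n$ for every $j\ge1$, and since $1-\gamma>0$,
\[
\sup_{j>J_n}2^{\gamma j}\lambda_j(\cU_{g,n})\ \le\ nM_n\sup_{j>J_n}2^{-(1-\gamma)j}\ \le\ nM_n\,2^{-(1-\gamma)J_n}\ \le\ n^{\gamma}M_n .
\]
By~\eqref{cond:2} with $m-k=1$ one has $E(U_{g,a+1}-U_{g,a})^{2}\le C(n-1)\le Cn$, so $P(M_n>t)\le n\cdot Cn/t^{2}$ and $M_n=O_P(n)$; hence $n^{-3/2}\sup_{j>J_n}2^{\gamma j}\lambda_j(\cU_{g,n})=O_P(n^{\gamma-1/2})=o_P(1)$, again because $\gamma<1/2$. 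Combining the two ranges of $j$ gives $n^{-3/2}\sup_{j\ge1}2^{\gamma j}\lambda_j(\cU_{g,n})=o_P(1)$, which by the reduction above is the assertion of the lemma.

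The main obstacle is the first part: one must represent the dyadic second difference of the polygonal line at level $j$ as a bounded combination of only $O(2^{j})$ genuine increments $U_{g,p}-U_{g,q}$ — not $O(n)$ — each over a block of length $O(n2^{-j})$, since it is exactly this count, together with the $L^{2}$ bound~\eqref{cond:2}, that makes $\sum_{j\le J_n}n^{-1}2^{2\gamma j}$ of order $n^{2\gamma-1}\log n=o(1)$ precisely when $\gamma<1/2$; with only a second moment available there is no room for a wasteful union bound. The threshold $\gamma<1/2$ then reappears in the same guise as the condition making both the geometric-type sum over the low levels and the factor $n^{\gamma-1/2}$ over the high levels vanish.
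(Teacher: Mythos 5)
Your proof is correct and follows essentially the same route as the paper: both rest on Ciesielski's dyadic (sequential-norm) characterization of the H\"older norm, reduce the polygonal line to $O(2^j)$ grid increments of length $O(n2^{-j})$ per level, and then apply the second-moment bound (\ref{cond:2}) level by level, which is exactly the counting behind the paper's estimate $E\|n^{-3/2}\cU_{g,n}\|_\gamma^2\le C_\gamma n^{2\gamma-1}$. The only (harmless) differences are that the paper packages the interpolation and high-frequency levels into its deterministic Lemma \ref{lemma1} and bounds the squared norm in $L^2$, whereas you use Chebyshev plus a union bound per level and a separate Lipschitz/$M_n$ argument for levels $j>\log n$.
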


\begin{rem} For an antisymmetric kernel $h$ the condition (\ref{cond:2}) follows from the following one: there exists a constant $C>0$ such that for any $0 \le m_1 <  n_1 \le m_2 < n_2$,
	\begin{equation}\label{cond:2a}
	E\Big(\sum^{n_1}_{i=m_1+1}\sum^{n_2}_{j=m_2+1}g(X_i, X_j)\Big)^2\le 
	C(n_1 - m_1)(n_2 - m_2).
	\end{equation}
Indeed, by antisymmetry 
$$
U_{g,m}-U_{g,k}=\sum_{i=k+1}^m\sum_{j=m+1}^n h(X_i, X_j)+\sum_{i=k+1}^m\sum_{j=1}^k h(X_i, X_j), 
$$
so that (\ref{cond:2a}) yields
$$
E(U_{g,m}-U_{g,k})^2\le 2C[(m-k)(n-m)+(m-k)(k-1)]\le 2C(m-k)(n-(m-k)).
$$
\end{rem}

Before we proceed with the proof of Lemma \ref{H-lem:3} we need some preparation.	
Let $D_j$ be the set of dyadic numbers of level $j$ in $[0,1]$,
that is $D_0 :=\{0,1\}$ and for $j\geq 1$, $D_j:=
\bigl\{(2l-1)2^{-j};\;1\leq l \leq 2^{j-1}\bigr\}$. For $r\in D_j$
set $r^-:=r-2^{-j}$, $r^+:=r+2^{-j}$, $j\geq 0$.
For $f:[0,1]\to \eR$ and $r\in D_j$ define
\[
\lambda_r(f):= 
\begin{cases}
f(r^+)+f(r^-)-2f(r) &\text{if $j\ge 1$,}\\
f(r)                &\text{if $j=0$.}
\end{cases}
\]
The following sequential norm on $\cH^o_\gamma[0, 1]$ defined by
$$
2^{-1}||f||^{\seq}_\gamma:=\sup_{j\ge 0}2^{\gamma j}\max_{r\in D_j}|\lambda_r(f)|,
$$
is equivalent to the norm $||f||_\gamma$, 
see~\cite{Ciesielski:60}: there is a positive constant $c_\gamma$ such that
\begin{equation}\label{equiv:1}
||f||^{\seq}_\gamma\le ||f||_\gamma\le c_\gamma||f||^{\seq}_\gamma,\ \ \ f\in \cH^o_\gamma[0, 1].
\end{equation}  
Set $\mathcal{D}_j:=\{k2^{-j},$ $0\le k\le 2^j\}$.
In what follows, we denote by $\log$ the logarithm with basis $2$
($\log 2 = 1$).

\begin{lem}\label{lemma1} For any $0\le \gamma\le 1$ there is a constant $c_\gamma>0$ such that, if $V_n$ is a polygonal line function with vertexes $(0, 0), (k/n, V_n(k/n)), k=1, \dots, n$, then 
	$$
	||V_{n}||_\gamma\le  c_\gamma \max_{0\le j\le \log n}2^{\gamma j}\max_{r\in \mathcal{D}_j}\Big|V_{n}(\flo{nr+n2^{-j}}/n)-V_{n}(\flo{nr}/n)\Big|.
	$$
\end{lem}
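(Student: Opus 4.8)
The plan is to pass to the equivalent Ciesielski sequential norm: by the equivalence (\ref{equiv:1}) it suffices to find a constant $c_\gamma$ with
$$
2^{-1}\|V_n\|^{\seq}_\gamma=\sup_{j\ge 0}2^{\gamma j}\max_{r\in D_j}|\lambda_r(V_n)|\ \le\ c_\gamma R_n,
$$
where $R_n$ denotes the right‑hand side of the asserted inequality. I would split the supremum over $j$ into a \emph{coarse range} $0\le j\le\log n$, on which the dyadic step $2^{-j}$ is at least $1/n$, and a \emph{fine range} $j>\log n$, on which it is smaller than $1/n$, and treat the two ranges separately. Throughout put $m_n:=\max_{0\le k<n}|V_n((k+1)/n)-V_n(k/n)|$ for the largest increment of $V_n$ over an elementary mesh interval; since $V_n$ is affine on each $[k/n,(k+1)/n]$ with slope of modulus at most $n\,m_n$, we have $|V_n(s)-V_n(\flo{ns}/n)|\le m_n$ for every $s\in[0,1]$.

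In the coarse range I would write $\lambda_r(V_n)=[V_n(r^+)-V_n(r)]-[V_n(r)-V_n(r^-)]$ and, in each bracket, replace the dyadic endpoints by the mesh points $\flo{nr}/n$, $\flo{nr^\pm}/n$. Because $\flo{nr^\pm}=\flo{nr\pm n2^{-j}}$, after the replacement the two brackets become exactly $V_n(\flo{nr+n2^{-j}}/n)-V_n(\flo{nr}/n)$ and $V_n(\flo{nr^-+n2^{-j}}/n)-V_n(\flo{nr^-}/n)$, i.e. two of the quantities entering $R_n$ at level $j$ (note $r,r^-\in\mathcal D_j$). Since the total replacement error is at most $4m_n$, this gives $|\lambda_r(V_n)|\le 2\cdot 2^{-\gamma j}R_n+4m_n$; multiplying by $2^{\gamma j}$ and using $j\le\log n$ yields $\sup_{0\le j\le\log n}2^{\gamma j}\max_{r\in D_j}|\lambda_r(V_n)|\le 2R_n+4\,n^{\gamma}m_n$.

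In the fine range $2^{-j}<1/n$, so $V_n$ is affine on $[r,r^+]$ and on $[r^-,r]$ apart from at most one mesh point; hence $|V_n(r^+)-V_n(r)|\le 2^{-j}\,n\,m_n$ and likewise on $[r^-,r]$, so $|\lambda_r(V_n)|\le 2n\,m_n\,2^{-j}$. Consequently, for $\gamma<1$,
$$
\sup_{j>\log n}2^{\gamma j}\max_{r\in D_j}|\lambda_r(V_n)|\ \le\ 2n\,m_n\sup_{j>\log n}2^{-(1-\gamma)j}\ \le\ 2n\,m_n\cdot n^{-(1-\gamma)}\ =\ 2\,n^{\gamma}m_n,
$$
and the same bound (with the last supremum equal to $1$) holds for $\gamma=1$. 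It then remains to absorb $n^{\gamma}m_n$ into $R_n$: the elementary increments $V_n((k+1)/n)-V_n(k/n)$ reappear, up to the weight $n^{\gamma}$, among the defining quantities of $R_n$ at the finest dyadic level $j$ with $2^{-j}\asymp 1/n$ (there $\flo{nr+n2^{-j}}-\flo{nr}\in\{0,1\}$, and as $r$ runs through $\mathcal D_j$ the index $\flo{nr}$ runs through all of $\{0,\dots,n\}$), so $n^{\gamma}m_n\le c_\gamma' R_n$. Combining the three estimates gives $\|V_n\|^{\seq}_\gamma\le c_\gamma R_n$, and (\ref{equiv:1}) finishes the proof.

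The step I expect to be the genuine obstacle is the last one, namely verifying that $R_n$ dominates the elementary increments $m_n$: this forces one to line up the dyadic grid $\mathcal D_j$ with the uniform grid $\{k/n\}$ carefully, and it is precisely here that the cut‑off ``$\log n$'' must be understood as $\lceil\log n\rceil$ (or an extra half‑level must be inserted), a change affecting only the value of $c_\gamma$. Once the coarse/fine split at scale $1/n$ is in place, the rest is routine bookkeeping, the two endpoint replacements being the only slightly delicate points.
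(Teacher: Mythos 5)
Your argument is essentially the paper's own proof: pass to the Ciesielski sequential norm, split the dyadic levels at scale $1/n$, bound the fine levels via the slope estimate $n\Delta_n 2^{-j}$, replace dyadic endpoints by mesh points on the coarse levels with an error of order $\Delta_n$ per level, and absorb $n^{\gamma}\Delta_n$ into the right-hand side at the finest retained level. The rounding caveat you flag (that the absorption step needs the cut-off read as $\lceil\log n\rceil$, or $n$ a power of two) is genuine, but it is equally present and unacknowledged in the paper's proof, which simply asserts $\flo{nr+n2^{-j}}=\flo{nr}+1$ at $j=\log n$; so your proposal matches the paper both in method and in its only delicate point.
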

\begin{proof}
	First we remark that for any $j\ge 1$,
	$$
	\max_{r\in D_j}|\lambda_r(V_n)| \le
	\max_{r\in D_j}|V_n(r^+)-V_n(r)| + \max_{r\in D_j}|V_n(r)-V_n(r^-)|.
	$$
	As $r^+$ and $r^-$ belong to $\mathcal{D}_j$, this gives, 
	$$
	\sup_{j\ge 1}2^{\gamma j}\max_{r\in D_j}|\lambda_r(V_n)|
	\le
	2\sup_{j\ge 1}2^{\gamma j}\max_{r\in \mathcal{D}_j}|V_n(r+2^{-j})-V_n(r)|
	$$
and it follows by (\ref{equiv:1}),  
	$$
	||V_n||_\gamma \le 2 c_\gamma\sup_{j\ge 0}2^{\gamma j}\max_{r\in \mathcal{D}_j}|V_n(r+2^{-j})-V_n(r)|.
	$$
If $s$ and $t>s$ belong to the same interval, say, $[(k-1)/n, k/n]$, then, observing that the slope of $V_n$ in this interval is precisely $n[V_n(k/n)-V_n((k-1)/n)]$, we have
\begin{align*}
|V_n(t)-V_n(s)|&=
n(t-s)|V_n(k/n)-V_n((k-1)/n)|\le n(t-s)\Delta_n, 
\end{align*}
where $\Delta_n=\max_{1\le k\le n}|V_n(k/n)-V_n((k-1)/n)|$.
If $s\in [(k-1)/n, k/n), t\in [k/n, (k+1)/n)$ then 
\begin{align*}
|V_n(t)-V_n(s)|&\le |V_n(t)-V_n(k/n)|+|V_n(k/n)-V_n(s)|
\le n(t-s)\Delta_n. 
\end{align*}	
If $s\in [(k-1)/n, k/n)$, $t\in [(j-1)/n, j/n)$ and $j>k+1$, then
\begin{align*}
|V_n(t)-V_n(s)|&\le |V_n(t)-V_n((j-1)/n)|+|V_n(k/n)-V_n((j-1)/n)|+|V_n(k/n)-V_n(s)|\\
&\le |V_n(k/n)-V_n((j-1)/n)|+n[(k/n-s)+(t-(j-1)/n)]\Delta_n.
\end{align*}	
We apply these three configurations to $s=r$ and $t=r+2^{-j}$. 
If $j\ge \log n$ then only the first two configurations are possible and we deduce
\begin{align*}
\max_{j\ge \log n}2^{\gamma j}\max_{r\in \mathcal{D}_j}|V_n(r+2^{-j})-V_n(r)|
\le \max_{j\ge \log n}2^{\gamma j}n2^{-j}\Delta_n= 
2n^\gamma \Delta_n. 
\end{align*}
If $j<\log n$ then we apply the third configuration to obtain
\begin{align*}
\max_{j<\log n}2^{\gamma j}&\max_{r\in \mathcal{D}_j}|V_n(r+2^{-j})-V_n(r)|
\le \max_{j< \log n}2^{\gamma j}\max_{r\in \mathcal{D}_j}|V_n(\flo{nr+n2^{-j}}/n)-V_n(\flo{nr}/n)|\\
&+
2\max_{j<\log n}2^{\gamma j}n2^{-j}\max_{1\le k\le n}|V_n(k/n)-V_n((k-1)/n)|\\
&\le \max_{j< \log n}2^{\gamma j}\max_{r\in \mathcal{D}_j}|V_n(\flo{nr+n2^{-j}}/n)-V_n(\flo{nr}/n)| +
2n^\gamma\Delta_n.
\end{align*}
To complete the proof just observe that $\flo{nr+2^{-j}}=\flo{nr}+1$ if $j=\log n$ and so $\Delta_n\le \max_{j\le \log n}2^{\gamma j}\max_{r\in \mathcal{D}_j}|V_n(\flo{nr+n2^{-j}}/n)-V_n(\flo{nr}/n)|$.  
\end{proof}

\begin{proof}[Proof of Lemma \ref{H-lem:3}] By Lemma \ref{lemma1} we have with some constant $C>0$, 
$$
E||\cU_{g,n}||^2_\gamma\le C\sum_{j=0}^{\log n}2^{2\gamma j}2^j\max_{r\in \mathcal{D}_j}E\Big(\cU_{g,n}(\flo{nr+n2^{-j}}/n)-
\cU_{g,n}(\flo{nr}/n)\Big)^2.
$$
Condition (\ref{cond:2}) gives
$$
E\Big(\cU_{g, n}(m/n)-\cU_{g, n}(k/n)\Big)^2\le C(m-k)(n-(m-k)).
$$
This yields, taking into account that $\flo{nr+n2^{-j}}-\flo{nr}\le n2^{-j}$ for $r\in \mathcal{D}_j$, 
$$
E||n^{-3/2}\cU_{g,n}||^{2}_\gamma\le C_\gamma n^{-3}\sum_{j=1}^{\log n}2^{2\gamma j} 2^j[n2^{-j}(n-n2^{-j})]\le C_\gamma n^{-1+2\gamma}.
$$
This completes the proof due to the restriction $0\le \gamma<1/2$.  
\end{proof}

The next lemma gives a general conditions for the tightness of the sequence $(n^{-1/2}W_{h_1, n})$ in H\"{o}lder spaces.

\begin{lem}\label{H-lem:4} Assume that the sequence $(X_i)_{i\in\eN}$ is a stationary and for a $q> 2$, there is a constant $c_q>0$ such that for any $0\le k<m\le n$
	\begin{equation}\label{Ros:1}
	E\Big|\sum_{i=k+1}^m h_1(X_i)\Big|^q\le c_q(m-k)^{q/2}.
\end{equation} 
Then for any $0\le \gamma<1/2-1/q$ the sequence $(n^{-1/2}W_{h_1,n})$ is tight in the space $\cH^o_{\gamma}[0, 1]$.
\end{lem}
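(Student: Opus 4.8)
The plan is to obtain tightness of $\xi_n:=n^{-1/2}W_{h_1,n}$ in $\cH^o_\gamma[0,1]$ from the sequential description of the H\"older norm recorded in (\ref{equiv:1}). Every $\xi_n$ is a polygonal line, hence Lipschitz, hence a bona fide element of $\cH^o_\gamma[0,1]$ for each $\gamma<1$; moreover, under Ciesielski's isomorphism $\cH^o_\gamma[0,1]\cong c_0$ a subset is relatively compact exactly when the coefficient arrays $\big(2^{\gamma j}\lambda_r(f)\big)_{j\ge0,\,r\in D_j}$ are uniformly bounded and uniformly negligible in the tail. Since $\xi_n(0)=0$, tightness of $(\xi_n)_n$ will therefore follow once I show that for every $\varepsilon>0$
$$
\lim_{J\to\infty}\ \limsup_{n\to\infty}\ P\Big(\sup_{j\ge J}\,2^{\gamma j}\max_{r\in D_j}\big|\lambda_r(\xi_n)\big|>\varepsilon\Big)=0
$$
(the level $j=0$ drops out once $J\ge1$). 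Fix $\varepsilon>0$ and $J\ge1$. For all $n$ with $\log n\ge J$ I split the supremum over dyadic levels into the \emph{fine} range $j>\log n$ and the \emph{coarse} range $J\le j\le\log n$, writing $M_n:=\max_{1\le k\le n}|h_1(X_k)|$.

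In the fine range, if $j>\log n$ then any cell of the level-$j$ dyadic partition meets at most one vertex $k/n$ of the polygonal line, so the same three-configuration bound used in the proof of Lemma \ref{lemma1} gives $|\lambda_r(W_{h_1,n})|\le 2n2^{-j}M_n$ for every $r\in D_j$; since $\gamma<1$ makes $j\mapsto 2^{(\gamma-1)j}$ decreasing, this yields
$$
\sup_{j>\log n}\,2^{\gamma j}\max_{r\in D_j}\big|\lambda_r(\xi_n)\big|\ \le\ 2n^{\gamma-1/2}M_n .
$$
By stationarity and (\ref{Ros:1}) applied to a single summand one has $E|h_1(X_1)|^q\le c_q$, hence $EM_n^q\le\sum_{k=1}^nE|h_1(X_k)|^q\le n\,c_q$, and Markov's inequality gives
$$
P\big(2n^{\gamma-1/2}M_n>\tfrac12\varepsilon\big)\ \le\ \frac{4^q c_q}{\varepsilon^q}\,n^{\,1-q(1/2-\gamma)}\ \longrightarrow\ 0\qquad(n\to\infty),
$$
precisely because the hypothesis $\gamma<1/2-1/q$ is $q(1/2-\gamma)>1$. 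So the fine range contributes $0$ to the $\limsup_n$, uniformly in $J$.

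In the coarse range, fix $r\in D_j$ with $J\le j\le\log n$ and write $\lambda_r(W_{h_1,n})=\big(W_{h_1,n}(r^+)-W_{h_1,n}(r)\big)-\big(W_{h_1,n}(r)-W_{h_1,n}(r^-)\big)$. Using $W_{h_1,n}(b/n)-W_{h_1,n}(a/n)=\sum_{i=a+1}^b h_1(X_i)$ for integers $a\le b$, together with affine interpolation between vertices, each of the two differences is a block sum $\sum_{i\in I}h_1(X_i)$ over $|I|\le n2^{-j}+1\le 2n2^{-j}$ consecutive indices (here $n2^{-j}\ge1$ since $j\le\log n$), up to a boundary term of modulus $\le 2M_n$. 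Hence, up to an absolute constant $c$, the event $\{2^{\gamma j}|\lambda_r(W_{h_1,n})|>\varepsilon n^{1/2}\}$ forces either $2^{\gamma j}\big|\sum_{i\in I}h_1(X_i)\big|>c\,\varepsilon n^{1/2}$ for one of the two blocks, or $n^\gamma M_n>c\,\varepsilon n^{1/2}$ (using $2^{\gamma j}\le n^\gamma$), the latter being of the type already controlled in the fine range. For the block sums, Markov's inequality at the exponent $q$ and (\ref{Ros:1}) give
$$
P\Big(2^{\gamma j}\Big|\sum_{i\in I}h_1(X_i)\Big|>c\,\varepsilon n^{1/2}\Big)\ \le\ C_q\,\varepsilon^{-q}\,2^{\gamma q j}\,\big(2n2^{-j}\big)^{q/2}\,n^{-q/2}\ =\ C_q\,\varepsilon^{-q}\,2^{-q j(1/2-\gamma)} ,
$$
with $C_q$ depending only on $q$. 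Summing over the $2^{j-1}$ points of $D_j$ gives $C_q'\varepsilon^{-q}2^{-\alpha j}$ with $\alpha:=q(1/2-\gamma)-1>0$, and summing over $j\ge J$ gives $C_q''\varepsilon^{-q}2^{-\alpha J}$, uniformly in $n$. Combining with the fine range,
$$
\limsup_{n\to\infty}\ P\Big(\sup_{j\ge J}\,2^{\gamma j}\max_{r\in D_j}\big|\lambda_r(\xi_n)\big|>\varepsilon\Big)\ \le\ C_q''\,\varepsilon^{-q}\,2^{-\alpha J}\ \longrightarrow\ 0\qquad(J\to\infty),
$$
which is the required tightness.

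The one genuinely delicate point is this two-sided geometric bookkeeping over dyadic levels: at the coarse levels the number $2^{j-1}$ of coefficients must be beaten by the decay $2^{-q j(1/2-\gamma)}$ extracted from the moment bound (\ref{Ros:1}), which works exactly when $\gamma<1/2-1/q$; and the very same condition, in the shape $q(1/2-\gamma)>1$, is what forces $n^{\gamma-1/2}\max_{1\le k\le n}|h_1(X_k)|\to0$ in probability at the fine levels. One has to check that (\ref{Ros:1}) is strong enough to close both estimates at once, which it is, the threshold $\gamma=1/2-1/q$ being sharp for each.
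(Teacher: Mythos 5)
Your argument is correct in substance and ends up with the same dyadic chaining estimate as the paper, but it closes the tightness argument by a different mechanism. The paper fixes an intermediate exponent $\beta\in(\gamma,1/2-1/q)$, uses Lemma \ref{lemma1} to dominate $\|n^{-1/2}W_{h_1,n}\|_\beta$ by lattice increments over the levels $j\le\log n$ only (the fine levels being absorbed deterministically into the maximal lattice increment inside Lemma \ref{lemma1}), applies Markov's inequality with (\ref{Ros:1}) to get $\sup_n P(\|n^{-1/2}W_{h_1,n}\|_\beta>a)\le ca^{-q}$, and then invokes compactness of the embedding $\cH^o_\beta\hookrightarrow\cH^o_\gamma$. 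You instead work directly at the target exponent $\gamma$ and verify the coefficientwise ($c_0$-tail) tightness criterion underlying (\ref{equiv:1}), treating the fine levels $j>\log n$ stochastically via $M_n=\max_{k\le n}|h_1(X_k)|$ and $EM_n^q\le c_qn$, while the coarse levels reproduce the paper's geometric series $\sum_j 2^{-j(q(1/2-\gamma)-1)}$ with $\gamma$ in place of $\beta$; both routes hinge on exactly $q(1/2-\gamma)>1$. What your route buys is an explicit tail rate at the exponent $\gamma$ itself and no appeal to the compact embedding; what the paper's route buys is that it never needs a probabilistic bound for the fine scales and that it covers the endpoint $\gamma=0$ automatically. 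Two small points you should tie up: (a) the isomorphism $\cH^o_\gamma\cong c_0$ and the equivalence (\ref{equiv:1}) require $\gamma>0$ (the paper itself notes $\cH^o_0\cong C[0,1]$, which is not isomorphic to $c_0$), so the case $\gamma=0$ of the lemma should be deduced from the case of some $\gamma'\in(0,1/2-1/q)$ via the continuous (indeed compact) embedding $\cH^o_{\gamma'}\hookrightarrow\cH^o_0$ --- essentially the device the paper uses for every $\gamma$; (b) tightness requires, besides the uniform tail negligibility you prove, stochastic boundedness of the coefficients (equivalently of the sequential norm), but this is already contained in your estimates, since the coarse-range bound is uniform in $n$ and of order $\e^{-q}$, the fine-range bound is at most $4^qc_q\e^{-q}$ because $n^{1-q(1/2-\gamma)}\le 1$, and the level-$0$ coefficient is controlled by (\ref{Ros:1}) with $k=0$, $m=n$; a sentence making this explicit would complete the proof.
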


\begin{proof} Fix $\beta>0$ such that $0\le \gamma<\beta<1/2-1/q$.
By Arcela-Ascoli the embedding $\cH^o_\beta[0, 1]\to \cH^o_\gamma[0, 1]$ is compact, hence, it is enough to prove 
	\begin{equation}\label{tight_eq:1}
	\lim_{a\to\infty}\sup_{n\ge 1}P(||n^{-1/2}W_{h_1,n}||_\beta>a)=0.
	\end{equation}
By Lemma \ref{lemma1},
$$
P(||n^{-1/2}W_{h_1,n}||_\beta>a)\le I_n(a),
$$
where
$$
I_{n}(a)=P\Big(\max_{0\le j\le \log n}2^{\beta j}\max_{r\in \mathcal{D}_j}\Big|W_{h_1,n}(\flo{nr+n2^{-j}}/n)-W_{h_1,n}(\flo{nr}/n)\Big|\ge c_\beta n^{1/2}a\Big).
$$
with some constant $c_\beta>0$. 
Since $\flo{nr+n2^{-j}}-\flo{nr}\le n2^{-j}$ we have by condition (\ref{Ros:1}),
\begin{align*}
I_n(a)&\le cn^{-q/2}a^{-q}\sum_{j=1}^{\log n}2^{q\beta j}2^j\max_{r\in D_j}E\Big|W_{h_1, n}(\flo{nr+n2^{-j}}/n)-W_{h_1, n}(\flo{nr}/n)\Big|^q\\
&=cn^{-q/2}a^{-q}\sum_{j=1}^{\log n}2^{q\beta j}2^j\max_{r\in D_j}E\Big|\sum^{\flo{nr+n2^{-j}}}_{i=\flo{nr}+1}h_1(X_i)\Big|^q\\
&\le cn^{-q/2}a^{-q}\sum_{j=1}^{\log n}2^{q\beta j}2^j (n2^{-j})^{q/2}\\
&\le ca^{-q}\sum_{j=1}^{\log n} 2^{-j(q/2-q\beta-1)},
\end{align*}
with some constant $c>0$. Since $q/2-q\beta-1>0$, we obtain $I_n(a)\le ca^{-q}$ and complete the proof of (\ref{tight_eq:1}) and that of the lemma.   
\end{proof}

Summing up we have the following functional limit theorem for the process $\cU_{h,n}$.

\begin{teo}\label{general:FCLT}
Assume that the sequence $(X_i)$ is stationary sequence of $\eS$-valued random elements. Let $h$ be an antisymmetric kernel end $E|h(X_1, X_2)|^p<\infty$ for some $p>2$. If   
	\begin{itemize}
		\item[(i)] there is a constant $C>0$ such that for any $0\le m_1<n_1\le m_2<n_2$ the inequality (\ref{cond:2a}) is satisfied;
		\item[(ii)] for some $2<q\le p$ the inequality (\ref{Ros:1}) is satisfied;
		\item[(iii)] there is a Gaussian process $\cU_h$ such that
		$$
		n^{-1/2}W_{h_1,n}\fddc \cU_h,
		$$
	\end{itemize}
then 
$$
n^{-3/2}\cU_{h,n}\wc \cU^o_h\ \ \textrm{in the space}\ \ \cH^o_\gamma[0, 1]
$$
for any $0\le \gamma<1/q,$ where $\cU_h^o=(\cU_h(t)-t\cU_h(1), t\in [0, 1])$.	
\end{teo}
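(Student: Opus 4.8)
The plan is to decompose $\cU_{h,n}$ via Hoeffding's identity \eqref{H-eq:1}, dispose of the degenerate part in H\"older norm, and then upgrade the finite--dimensional convergence of the linear part to weak convergence in $\cH^o_\gamma[0,1]$ by a tightness argument. For the degenerate part, since $h$ is antisymmetric the Remark following Lemma \ref{H-lem:3} shows that hypothesis (i) (the bound \eqref{cond:2a}) implies \eqref{cond:2}; Lemma \ref{H-lem:3} then yields $||n^{-3/2}\cU_{g,n}||_\gamma=o_P(1)$ for every $0\le\gamma<1/2$, hence throughout the range of interest. So it remains to analyze the normalized linear term $n^{-1/2}\big[W_{h_1,n}(t)-tW_{h_1,n}(1)\big]$.

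Write this term as $\big(\Lambda(n^{-1/2}W_{h_1,n})\big)(t)$, where $\Lambda x:=x-x(1)\,\mathrm{id}$, $\mathrm{id}(t)=t$. I would first check that $\Lambda$ is a bounded (hence continuous) linear operator on $\cH^o_\gamma[0,1]$: indeed $||\Lambda x||_\gamma\le||x||_\gamma+|x(1)|\,||\mathrm{id}||_\gamma\le 2||x||_\gamma$, using $|x(1)|\le||x||_\infty\le||x||_\gamma$ and $||\mathrm{id}||_\gamma=1$, and $\mathrm{id}\in\cH^o_\gamma[0,1]$. Then hypothesis (ii) together with Lemma \ref{H-lem:4} gives tightness of $(n^{-1/2}W_{h_1,n})$ in $\cH^o_\gamma[0,1]$ in the admissible range (namely $\gamma<1/2-1/q$), and since $\Lambda$ is continuous the sequence of linear parts $\big(\Lambda(n^{-1/2}W_{h_1,n})\big)$ is tight in $\cH^o_\gamma[0,1]$ as well. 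For the finite--dimensional distributions, hypothesis (iii) and the continuous mapping theorem applied to the coordinates $t_1,\dots,t_m,1$ give $\Lambda(n^{-1/2}W_{h_1,n})\fddc\Lambda\cU_h=\cU_h^o$.

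Now I would combine the two facts. The space $\cH^o_\gamma[0,1]$ is a separable Banach space on which the coordinate evaluations are continuous and separate points, so a Borel probability measure on it is determined by its finite--dimensional distributions. Hence tightness of $\big(\Lambda(n^{-1/2}W_{h_1,n})\big)$ together with its fdd convergence forces every subsequential weak limit to equal the law of $\cU_h^o$; in particular $\cU_h^o$ has a version in $\cH^o_\gamma[0,1]$, and $\Lambda(n^{-1/2}W_{h_1,n})\wc\cU_h^o$ in $\cH^o_\gamma[0,1]$. Adding back the $o_P(1)$ degenerate term from the first step and invoking Slutsky's lemma in $\cH^o_\gamma[0,1]$ then yields $n^{-3/2}\cU_{h,n}\wc\cU_h^o$, as claimed.

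I expect the main obstacle to be organizational rather than analytic, since the hard estimates already sit in Lemmas \ref{lemma1}, \ref{H-lem:3} and \ref{H-lem:4}. The points that need genuine care are: (a) that the centering operator $\Lambda$ is bounded on $\cH^o_\gamma[0,1]$ and commutes with taking finite--dimensional distributions, so that tightness and fdd convergence genuinely transfer from $(n^{-1/2}W_{h_1,n})$ to the linear parts; and (b) that the weak limit actually lives in $\cH^o_\gamma[0,1]$ — this is supplied by the tightness, not by the fdd's, and is exactly what legitimizes speaking of weak convergence in the H\"older space. One must also keep the exponent bookkeeping straight: Lemma \ref{H-lem:3} needs only $\gamma<1/2$, while the tightness of the linear part via Lemma \ref{H-lem:4} requires $\gamma<1/2-1/q$, so the latter is the restriction on $\gamma$ that this route actually delivers.
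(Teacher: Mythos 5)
Your proposal is correct and is essentially the paper's own argument: the theorem is stated there as a summing-up of the Hoeffding decomposition (\ref{H-eq:1}), the Remark reducing (\ref{cond:2a}) to (\ref{cond:2}) so that Lemma \ref{H-lem:3} kills the degenerate part, Lemma \ref{H-lem:4} for tightness of the linear part, and hypothesis (iii) plus ``tightness + fdd convergence'' in the separable Banach space $\cH^o_\gamma[0,1]$. Your closing remark on the exponent bookkeeping is also apt: this route yields the range $0\le\gamma<1/2-1/q$ (which is how the theorem is in fact used in Theorems \ref{iidsample} and \ref{teo:mixing}), and the ``$\gamma<1/q$'' in the statement appears to be a slip.
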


\subsection{iid sample}
In this subsection we establish the $(h, \gamma)-FCLT$ for independent identically distributed sequences $(X_i)_{i\in\eN}$.

\begin{teo}\label{iidsample} Assume that $(X_i)$ are independent and identically distributed random elements in $\eS$ and the measurable function $h:\eS\times\eS\to\eR$ is antisymmetric. If $E|h(X_1, X_2)|^q<\infty$ for some $q>2$, then $(X_i)$ satisfies $(h, \gamma)-FCLT$ for any $0\le \gamma <1/2-1/q$ with the limit process $\cU_{h_1}=\sigma_{h} B$, where $B=(B(t), t\in [0, 1])$ is a standard Brownian bridge.

Particularly, if the kernel $h$ is antisymmetric and bounded, then
$(X_i)$ satisfies $(h, \gamma)$-FCLT for any $0\le \gamma<1/2$.
\end{teo}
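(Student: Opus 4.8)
The plan is to derive the statement from Theorem \ref{general:FCLT} by checking its three hypotheses for an i.i.d.\ sequence, taking there $p=q$, and then to identify the limit and the admissible range of H\"older exponents. First I would handle the \emph{linear part}. Since the $X_i$ are i.i.d., the random variables $h_1(X_i)$ are i.i.d.\ as well; by antisymmetry $E[h_1(X_1)]=E[h(X_1,X_2)]=0$, and by Jensen's inequality $E|h_1(X_1)|^q\le E|h(X_1,X_2)|^q<\infty$, so in particular $\var(h_1(X_1))=\sigma_h^2<\infty$. Rosenthal's inequality for sums of i.i.d.\ centred random variables with finite $q$-th moment then gives $E\bigl|\sum_{i=k+1}^m h_1(X_i)\bigr|^q\le C_q\bigl((m-k)^{q/2}(E h_1(X_1)^2)^{q/2}+(m-k)E|h_1(X_1)|^q\bigr)\le c_q(m-k)^{q/2}$ for all $0\le k<m\le n$, which is exactly (\ref{Ros:1}); this is hypothesis (ii). For hypothesis (iii), the classical multivariate central limit theorem applied to the increments of $W_{h_1,n}$ yields $n^{-1/2}W_{h_1,n}\fddc\sigma_h W$ with $W$ a standard Wiener process, so that the limit process of Theorem \ref{general:FCLT} is $\cU_h^o(t)=\sigma_h\bigl(W(t)-tW(1)\bigr)=\sigma_h B(t)$, the announced one.

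Next I would handle the \emph{degenerate part}, i.e.\ hypothesis (i). I would verify the stronger inequality (\ref{cond:2a}) for the Hoeffding kernel $g(x,y)=h(x,y)-h_1(x)+h_1(y)$ and then invoke the Remark after Lemma \ref{H-lem:3}. The key point is that $g$ is completely degenerate in \emph{both} arguments: $\int g(x,y)\,P_X(\dd y)=0$ for every $x$ straight from the definition of $h_1$, and $\int g(x,y)\,P_X(\dd x)=E[h(X_1,y)]+h_1(y)=0$ for every $y$, where antisymmetry is used through $E[h(X_1,y)]=-E[h(y,X_1)]=-h_1(y)$. Fix $0\le m_1<n_1\le m_2<n_2$ and set $A=\{m_1+1,\dots,n_1\}$, $B=\{m_2+1,\dots,n_2\}$; since $n_1\le m_2$ these index sets are disjoint. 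Expanding the square, $E\bigl(\sum_{i\in A}\sum_{j\in B}g(X_i,X_j)\bigr)^2=\sum_{i,i'\in A}\sum_{j,j'\in B}E[g(X_i,X_j)g(X_{i'},X_{j'})]$, and a short case distinction using independence and the two-sided degeneracy shows that every term with $(i,j)\ne(i',j')$ vanishes: if all four indices differ one uses full independence and $E[g(X_i,X_j)]=0$; if exactly one index is shared one conditions on the corresponding coordinate and uses degeneracy in the other variable. Hence the left-hand side collapses to $(n_1-m_1)(n_2-m_2)\,E[g(X_1,X_2)^2]$, and $E[g(X_1,X_2)^2]<\infty$ because $q>2$ forces $h(X_1,X_2)\in L^2$ while $h_1(X_1)\in L^2$ by the above. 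This gives (\ref{cond:2a}) with $C=E[g(X_1,X_2)^2]$, and thus (\ref{cond:2}) and hypothesis (i).

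To conclude I would assemble the pieces exactly as in the proof of Theorem \ref{general:FCLT}, which also pins down the range of $\gamma$. The Hoeffding splitting (\ref{H-eq:1}) gives $n^{-3/2}\cU_{h,n}(t)=n^{-1/2}\bigl[W_{h_1,n}(t)-tW_{h_1,n}(1)\bigr]+n^{-3/2}\cU_{g,n}(t)$. By Lemma \ref{H-lem:4} (with the present $q$) the sequence $(n^{-1/2}W_{h_1,n})$ is tight in $\cH^o_\gamma[0,1]$ for every $\gamma<1/2-1/q$; combined with the finite-dimensional convergence from hypothesis (iii) and the continuity on $\cH^o_\gamma[0,1]$ of the linear map $x\mapsto\bigl(x(t)-tx(1)\bigr)_{t\in[0,1]}$, the continuous mapping theorem gives weak convergence of the first summand to $\sigma_h B$ in $\cH^o_\gamma[0,1]$ for all $\gamma<1/2-1/q$. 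By Lemma \ref{H-lem:3}, the second summand satisfies $\|n^{-3/2}\cU_{g,n}\|_\gamma=o_P(1)$ for all $\gamma<1/2$. Slutsky's lemma in the separable Banach space $\cH^o_\gamma[0,1]$ then yields $n^{-3/2}\cU_{h,n}\wcn\sigma_h B$ in $\cH^o_\gamma[0,1]$ for every $\gamma<1/2-1/q$, which is the $(h,\gamma)$-FCLT with limit $\sigma_h B$. Finally, if $h$ is bounded then $E|h(X_1,X_2)|^q<\infty$ for every $q>2$, so the previous conclusion holds for every $\gamma<1/2-1/q$ and, taking the supremum over $q>2$, for every $\gamma<1/2$.

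I expect no serious obstacle here: Rosenthal's inequality, the central limit theorem, the continuous mapping theorem and Slutsky's lemma are classical, and the splitting is already available. The one place where the specific structure enters, and which requires a little care, is the verification of (\ref{cond:2a}): one must use antisymmetry of $h$ to obtain degeneracy of $g$ in its first variable, and then keep careful track of which indices coincide when expanding the second moment of the double sum.
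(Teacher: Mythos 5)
Your proposal is correct and follows essentially the same route as the paper: it verifies hypotheses (i)--(iii) of Theorem \ref{general:FCLT} exactly as the paper does, namely (i) via the two-sided degeneracy of the Hoeffding kernel $g$ so that cross terms vanish and the second moment collapses to $(n_1-m_1)(n_2-m_2)E[g^2(X_1,X_2)]$, (ii) via Rosenthal's inequality combined with Jensen to bound $E|h_1(X_1)|^q$, and (iii) via the classical CLT for the i.i.d.\ variables $h_1(X_i)$, after which the identification of the limit as $\sigma_h B$ and the bounded-kernel case are handled as in the paper. The only difference is that you spell out the assembly (Hoeffding splitting, Lemmas \ref{H-lem:3} and \ref{H-lem:4}, Slutsky) which the paper subsumes in Theorem \ref{general:FCLT}; this is not a different argument, just a more explicit one.
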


\begin{proof} We need to check conditions (i)-(iii) of Theorem \ref{general:FCLT}. Starting with (i) we have

$$
E\Big(\sum^{n_1}_{i=m_1+1}\sum^{n_2}_{j=m_2+1}g(X_i, X_j)\Big)^2
=\sum_{i, i'=m_1+1}^{n_1}\sum_{j=m_2+1}^{n_2}Eg(X_i, X_j)g(X_{i'}, X_{j'})
$$
and observe that $Eg(X_i, X_j)g(X_{i'}, X_{j'})=0$ if either $i\not=i'$ or $j\not=j'$. Indeed, it is enough to observe that $Eg(X_1, x)=0$ for each $x$:
\begin{align*}
Eg(X_1, x)&=E[h(X_1, x)-h_1(X_1)+h_1(x)]\\
&=E[-h(x, X_1)-h_1(X_1)+h_1(x)]\\
&=Eh_1(X_1)=0.
\end{align*}
Now, if $i\not=i'$, $j=j'$ then we have
$$
Eg(X_i, X_j)g(X_{i'}, X_{j'})=\int_{\eS}Eg(X_i, x)Eg(X_{i'}, x)P_X(dx)=0.
$$
Hence,
\begin{align*}
E\Big(\sum^{n_1}_{i=m_1+1}\sum^{n_2}_{j=m_2+1}g(X_i, X_j)\Big)^2&=
\sum^{n_1}_{i=m_1+1}\sum^{n_2}_{j=m_2+1}Eg^2(X_i, X_j)\\
&=(n_1-m_1)(n_2-m_2)Eg^2(X_1, X_2)\\
&\le 4(n_1-m_1)(n_2-m_2)Eh^2(X_1, X_2).
\end{align*}	
Condition (ii) is obtained via 
Rosenthal's inequality. Since the moment assumption gives
	$E|h_1(X_1)|^q=E[|E[h(X_1, X_2)|X_2]|^q]\le E|h(X_1, X_2)|^q<\infty$ we have 
	\begin{align*}
E\Big|\sum_{i=k+1}^m h_1(X_i)\Big|^q&\le c_q\Big[\Big(\sum_{i=k+1}^m Eh_1^2(X_i)\Big)^{q/2}+\sum_{i=k+1}^m E|h_1(X_i)|^q\Big]\\
&\le 2c_q(m-k)^{q/2}E|h_1(X_1)|^q.
\end{align*}
As the convergence $n^{-1/2}W_{h_1, n}\fddc \sigma_{h_1}W $ is well known, the proof is completed. 
\end{proof}

\subsection{Mixing sample}

In this subsection we establish the $(h, \gamma)-FCLT$ for $\beta$-mixing sequences $(X_i)_{i\in\eN}$. For $A\subset \eZ$ we will denote by $P_A$ the joint distribution of $\{X_i, i\in A\}$. We write $P_X$ for the distribution of $X_i$. We need some auxiliary lemmas:

\begin{lem}\label{g_2} Let $i_1<i_2<\cdots<i_k$ be arbitrary integers. Let $f:\eS^k\to \eR$ be a measurable function such that for any $j$, $1\le j\le k-1$,
	$$
	\int_{\eS^k} |f|^{1+\delta} d\Big[P_{{i_1},\dots, {i_k}}+P_{{i_1}, \dots, {i_j}}\otimes P_{{i_{j+1}}, \dots, {i_k}}\Big]<M,
	$$
	for some $\delta>0$. Then 
	$$
	\Big|\int_{\eS^k}f \ d\Big(P_{X_{i_1},\dots, X_{i_k}}-P_{X_{i_1}, \dots, X_{i_j}}\otimes P_{X_{i_{j+1}}, \dots, X_{i_k}})\Big|\le 4M^{1/(1+\delta)}\beta^{\delta/(1+\delta)}_{i_{j+1}-i_j}.
	$$
\end{lem}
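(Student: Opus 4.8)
The plan is to reduce the statement to the standard coupling characterization of the $\beta$-mixing coefficient. Recall that for two $\sigma$-fields $\mathcal{G}$ and $\mathcal{H}$ on a probability space, the $\beta$-mixing coefficient $\beta(\mathcal{G},\mathcal{H})$ equals $\frac12\sup\sum_{s,t}|P(G_s\cap H_t)-P(G_s)P(H_t)|$, the supremum being over all finite partitions $(G_s)\subset\mathcal{G}$, $(H_t)\subset\mathcal{H}$; equivalently it is the total variation distance $\|P_{\mathcal{G}\vee\mathcal{H}}-P_{\mathcal{G}}\otimes P_{\mathcal{H}}\|_{TV}$ between the joint law and the product of the marginals. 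Here I would take $\mathcal{G}=\sigma(X_{i_1},\dots,X_{i_j})$ and $\mathcal{H}=\sigma(X_{i_{j+1}},\dots,X_{i_k})$; by stationarity and the definition of $(\beta_m)$ in the excerpt, $\beta(\mathcal{G},\mathcal{H})\le\beta_{i_{j+1}-i_j}$, since the two blocks are separated by a gap of $i_{j+1}-i_j$.

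The key step is then a truncation argument to pass from bounded $f$ (where the total-variation bound is immediate) to the $L^{1+\delta}$-integrable case. First I would write $\mu:=P_{X_{i_1},\dots,X_{i_k}}$ and $\nu:=P_{X_{i_1},\dots,X_{i_j}}\otimes P_{X_{i_{j+1}},\dots,X_{i_k}}$, so that $\|\mu-\nu\|_{TV}=2\beta(\mathcal{G},\mathcal{H})\le 2\beta_{i_{j+1}-i_j}=:2\beta$. For a threshold $T>0$ split $f=f\bm{1}_{\{|f|\le T\}}+f\bm{1}_{\{|f|>T\}}=:f_T+f_T'$. For the bounded part, $\big|\int f_T\,d(\mu-\nu)\big|\le T\|\mu-\nu\|_{TV}\le 2T\beta$. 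For the tail part, by H\"older with exponents $1+\delta$ and $(1+\delta)/\delta$, for either measure $\lambda\in\{\mu,\nu\}$ one has $\int|f_T'|\,d\lambda\le\big(\int|f|^{1+\delta}d\lambda\big)^{1/(1+\delta)}\lambda(|f|>T)^{\delta/(1+\delta)}\le M^{1/(1+\delta)}(M/T^{1+\delta})^{\delta/(1+\delta)}=M\,T^{-\delta}$, using Markov's inequality $\lambda(|f|>T)\le M/T^{1+\delta}$. Hence $\big|\int f\,d(\mu-\nu)\big|\le 2T\beta+2MT^{-\delta}$, and optimizing (or simply choosing $T=M^{1/(1+\delta)}\beta^{-1/(1+\delta)}$) gives a bound of the form $C\,M^{1/(1+\delta)}\beta^{\delta/(1+\delta)}$; checking the constant, $T=(M/\beta)^{1/(1+\delta)}$ yields $2T\beta+2MT^{-\delta}=4M^{1/(1+\delta)}\beta^{\delta/(1+\delta)}$, which is exactly the claimed inequality.

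The main obstacle — really the only subtle point — is making sure the total-variation estimate $\|\mu-\nu\|_{TV}\le 2\beta_{i_{j+1}-i_j}$ is correctly attributed to the gap between the $j$th and $(j{+}1)$st indices and is uniform over the (bounded, measurable) test functions before truncation; once that is in hand the truncation and H\"older steps are routine. One should also note the case $\beta=0$ separately (then $\mu=\nu$ and the inequality is trivial), so that the division by $\beta$ in the choice of $T$ is legitimate, and observe that the hypothesis is assumed for \emph{every} $j$, so the bound holds for each admissible split point.
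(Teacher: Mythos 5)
Your proposal is correct: the truncation at $T=(M/\beta)^{1/(1+\delta)}$ combined with the total-variation bound $\bigl|\int f_T\,d(\mu-\nu)\bigr|\le 2T\beta_{i_{j+1}-i_j}$ and the H\"older--Markov estimate for the tail reproduces the stated inequality with the exact constant $4$. This is essentially the same argument as the paper's, which does not spell out a proof but simply refers to Lemma 1 of Yoshihara (1976), whose classical proof is precisely this truncation/coupling argument; your write-up just makes it self-contained.
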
 	
\begin{proof} The proof goes along the lines of the proof of Lemma 1 in Ken-ichi Yoshihara \cite{yoshihara1976}.
\end{proof}	

\begin{lem}\label{g2b} Assume that for a $\delta>0$ there is a constant $M$ such that 
	$$
	E|h(X_i, X_j)|^{2(1+\delta)}\le M
	$$
	for any $1\le i, j\le n$ and 
	$$
	\sum_{k=0}^\infty k\beta^{\delta/(1+\delta)}_k<\infty.
	$$
Then for any $0\le m_1<n_1\le m_2<n_2$,
	$$
	I(m_1, n_1, m_2,n_2):=E\Big(\sum_{i=m_1+1}^{n_1}\sum_{j=m_2+1}^{n_2}g(X_i, X_j)\Big)^2\le C(n_1-m_1)(n_2-m_2)
	$$
\end{lem}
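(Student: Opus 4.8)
The plan is to expand the square into a quadruple sum $I(m_1,n_1,m_2,n_2)=\sum_{i,i'=m_1+1}^{n_1}\sum_{j,j'=m_2+1}^{n_2}E[g(X_i,X_j)g(X_{i'},X_{j'})]$ and bound each term by a product of a constant and a mixing coefficient, exploiting the fact that $g$ has the degeneracy property $E[g(X_1,x)]=0$ for every $x$ (proved in the iid case from antisymmetry, and which continues to hold here since $h_1$ is defined via $P_X$). The first step is to record that, by H\"older's inequality and the moment bound $E|h(X_i,X_j)|^{2(1+\delta)}\le M$, one also has $E|h_1(X_i)|^{2(1+\delta)}\le M$ and hence $E|g(X_i,X_j)|^{2(1+\delta)}\le C_\delta M$ uniformly in $i,j$; this gives the integrability hypothesis needed to apply Lemma \ref{g_2} to the function $f(x_1,x_2,x_3,x_4)=g(x_1,x_2)g(x_3,x_4)$ (or to its lower-dimensional marginals when indices coincide).

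Next I would organize the quadruple sum according to which of the four indices $i<i'$ (WLOG by symmetry), $j,j'$ are closest together, i.e. split off the largest gap among the ordered values of $\{i,i',j,j'\}$ and apply Lemma \ref{g_2} at that gap to replace the joint law by a product law, at the cost of an error $4(C_\delta M)^{1/(1+\delta)}\beta_{\text{gap}}^{\delta/(1+\delta)}$. After the decoupling, the resulting product-measure integral is either zero — whenever the split isolates a single $g$-factor or a single $X$-variable on which $g$ integrates to $0$ by degeneracy — or is bounded by a constant. The upshot is that $E[g(X_i,X_j)g(X_{i'},X_{j'})]$ is nonzero only when the indices are suitably clustered, and is then dominated by $C\beta_{d}^{\delta/(1+\delta)}$ where $d$ is a gap between an $i$-type and a $j$-type index, or between $i$ and $i'$, or between $j$ and $j'$; the precise bookkeeping mirrors the classical Yoshihara-type estimates for degenerate $U$-statistics.

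Summing these bounds is then a counting exercise: for the ``diagonal-ish'' configurations one gets, for each fixed pair of closest indices at distance $k$, at most $O\big((n_1-m_1)(n_2-m_2)\big)$ choices of the remaining two indices times $\beta_k^{\delta/(1+\delta)}$ (or $k\beta_k^{\delta/(1+\delta)}$ when a second gap must also be summed), so that the hypothesis $\sum_k k\beta_k^{\delta/(1+\delta)}<\infty$ makes the whole sum bounded by $C(n_1-m_1)(n_2-m_2)$. The main obstacle is purely combinatorial rather than analytic: carefully enumerating the order types of the four indices $i,i',j,j'$ relative to the blocks $(m_1,n_1]$ and $(m_2,n_2]$, identifying in each case which single gap to decouple at so that degeneracy kills the product term unless the configuration is tight, and checking that in every surviving case the number of free indices left over is at most two — this is what keeps the final bound at the order $(n_1-m_1)(n_2-m_2)$ rather than a higher power. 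Everything else (the moment transfer to $g$, the application of Lemma \ref{g_2}, and the geometric-type summation over $k$) is routine.
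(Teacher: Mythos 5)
Your overall strategy coincides with the paper's: expand the square into the quadruple sum $\sum E[g(X_i,X_j)g(X_{i'},X_{j'})]$, decouple with the Yoshihara-type Lemma \ref{g_2}, use the degeneracy $\int_{\eS} g(x,y)P_X(dy)=0=\int_{\eS} g(x,y)P_X(dx)$ to annihilate the decoupled main term, and sum against $\sum_k k\beta_k^{\delta/(1+\delta)}<\infty$; your moment-transfer step (so that $f=g\cdot g$ satisfies the integrability hypothesis of Lemma \ref{g_2} under the joint and product laws) is correct and in fact more explicit than in the paper. The genuine gap sits exactly in the step you defer as bookkeeping. Since $i,i'\le n_1\le m_2<j,j'$, the ordered indices are $i_{(1)}\le i_{(2)}\le j_{(1)}\le j_{(2)}$ with three gaps: $|i-i'|$, the between-block gap $j_{(1)}-i_{(2)}$, and $|j-j'|$. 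Your rule ``split off the largest gap among the ordered values'' fails when the largest gap is the between-block one: decoupling there leaves the main term $E\big[g(X_{i},\tilde X_{j})g(X_{i'},\tilde X_{j'})\big]$ with an independent copy of the $j$-block, and degeneracy does not kill it (each $g$-factor retains a dependent variable from its own block), so those terms are only $O(1)$. When the blocks are far apart, the quadruples whose dominant gap is the middle one are far more numerous than $(n_1-m_1)(n_2-m_2)$, so an $O(1)$ bound on them is fatal to the claimed order; your fallback phrase ``dominated by $C\beta_d$ where $d$ is a gap between an $i$-type and a $j$-type index'' also cannot be right, since keying the count to the between-block distance yields of order $(n_1-m_1)^2(n_2-m_2)$ terms.

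The fix, which is the paper's actual argument, is to always decouple at one of the two within-block gaps, namely the larger of $|i-i'|$ and $|j-j'|$, irrespective of the middle gap: this isolates the outermost single variable ($X_{i_{(1)}}$ or $X_{j_{(2)}}$), the product integral vanishes by degeneracy, and hence $|E[g(X_i,X_j)g(X_{i'},X_{j'})]|\le 4M'^{1/(1+\delta)}\beta^{\delta/(1+\delta)}_{\max\{|i-i'|,|j-j'|\}}$. The counting is then by $k=\max\{|i-i'|,|j-j'|\}$: for fixed $k\ge 1$ there are at most $Ck(n_1-m_1)(n_2-m_2)$ quadruples (e.g.\ $i$ free, $i'$ in at most $2$ ways at distance $k$, $j$ free, $j'$ within distance $k$ of $j$ in at most $2k+1$ ways, plus the symmetric case), while the diagonal $i=i'$, $j=j'$ contributes $(n_1-m_1)(n_2-m_2)Eg^2$ directly; summing $k\beta_k^{\delta/(1+\delta)}$ finishes. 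Note that your counting sketch keyed to the \emph{closest} pair (the minimum gap) does not match the estimate that is actually available --- the bound is in terms of the \emph{maximum} of the two within-block gaps, and counting by the minimum leaves the farther pair unconstrained, giving too many quadruples. (The paper's display with $\beta_{\min\{|i_2-i_1|,|j_2-j_1|\}}$ is evidently a slip for $\max$; its own counting, which restricts the second pair to distance at most $k$, is the $\max$-version.)
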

\begin{proof}
We have
$$
I(m_1, n_1, m_2,n_2)=\sum_{i_1, i_2=m_1+1}^{n_1}\sum_{j_1, j_2=m_2+1}^{n_2}J(i_1, i_2, j_1, j_2),
$$
where
$$
J(i_1, i_2, j_1, j_2)=Eg(X_{i_1}, X_{j_1})g(X_{i_2}, X_{j_2}).
$$
First consider the case where $i_1<i_2$ and $j_1<j_2$. If $j_2-j_1>i_2-i_1$ then by Lemma \ref{g_2} we have
$$
\Big|J(i_1, i_2, j_1, j_2)-\int_{\eS^4}g(x_1, x_2)g(x_3, x_4)dP_{X_{i_1}, X_{i_2}, X_{j_1}}\otimes P_{X_{j_2}}\Big|\le 4M^{1/(1+\delta)}\beta^{\delta/(1+\delta)}_{j_1-j_2}.
$$
If $i_2-i_1>j_2-j_1$ then
$$
\Big|J(i_1, i_2, j_1, j_2)-\int_{\eS^4}g(x_1, x_2)g(x_3, x_4)dP_{X_{i_1}}\otimes P_{X_{i_2}, X_{j_1}, X_{j_2}}\Big|\le 4M^{1/(1+\delta)}\beta^{\delta/(1+\delta)}_{i_1-i_2}.
	$$
Note that for any $y\in \eS$,
$$
\int_{\eS} g(y, x)P_{X_{j_2}}(dx)=\int_{\eS} [h(y,x)-(h_1(y)-h_1(x))]P_{X_{j_2}}(dx)=0
$$
and
$$
\int_{\eS} g(x, y)P_{X_{i_1}}(dx)=\int_{\eS} [h(x,y)-(h_1(x)-h_1(y))]P_{X_{i_1}}(dx)=0.
$$
Treating the other cases in the same way, we deduce that for any $m_1<i_1,i_2\le n_2\le m_2<j_1,j_2\le n_2$, 
$$
|J(i_1, i_2, j_1, j_2)|\le 4M^{1/(1+\delta)}\beta^{\delta/(1+\delta)}_{\min\{|i_2-i_1|,|j_2-j_1|\}}.
$$
This yields
\begin{align*}
\left|I(m_1, n_1, m_2,n_2)\right|\leq C\sum_{i_1, i_2=m_1+1}^{n_1}\sum_{j_1, j_2=m_2+1}^{n_2}\beta^{\delta/(1+\delta)}_{\min\{|i_2-i_1|,|j_2-j_1|\}}.
\end{align*}
If $k:=\min\{|i_2-i_1|,|j_2-j_1|\}=|i_2-i_1|$, then there are less than $n_1-m_1$ choices for $i_1$, at most 2 choices for $i_2$, as $i_2\in\{i_1-k,i_1+k\}$. Furthermore, there are less than $n_2-m_2$ choices for $j_1$, and, because $|j_2-j_1|\leq k$, at most $2k+1$ choices for $j_2$. In the case $k:=\min\{|i_2-i_1|,|j_2-j_1|\}=|j_2-j_1|$, we can use a similar reasoning. In total, there are less than $12(n_1-m_1)(n_2-m_2)k$ ways to chose the indices for given $k$. We arrive at
\begin{equation*}
\left|I(m_1, n_1, m_2,n_2)\right|\leq C(n_1-m_1)(n_2-m_2)\sum_{k=0}^\infty k\beta^{\delta/(1+\delta)}_k=C(n_1-m_1)(n_2-m_2)
\end{equation*}

provided that $\sum_k k\beta^{\delta/(1+\delta)}_k<\infty$.

\end{proof}

\begin{lem}\label{beta:Ros} Assume that
	$$
	\int_{\eS}\Big(\int_{\eS}h(x, y)P_X(dy)\Big)^{r+\delta}P_X(dx)<\infty
	$$ 
	for some $r>2$ and $\delta>0$. If
	$$
	\sum_k k^{r/2-1}\beta^{\delta/(r+\delta)}_k<\infty
	$$
	then there is a constant $c_{r, \delta}>0$ such that for any $0\le k<m\le n$,
	$$
	E\Big|\sum_{i=k+1}^m h_1(X_i)\Big|^r\le c_{r, \delta} (m-k)^{r/2}.
	$$
\end{lem}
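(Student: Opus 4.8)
The statement is a Rosenthal-type moment bound for partial sums of the stationary sequence $(h_1(X_i))$ under $\beta$-mixing, and the natural approach is to invoke a known Rosenthal inequality for mixing sequences rather than to reprove it from scratch. First I would record that $h_1(X_1)$ has a finite moment of order $r+\delta>2$, which is exactly the hypothesis $\int_\eS\big(\int_\eS h(x,y)P_X(dy)\big)^{r+\delta}P_X(dx)<\infty$; and that $E[h_1(X_1)]=0$, since $h$ is antisymmetric and the $X_i$ are identically distributed (this is the computation $Eh_1(X_1)=E[h(X_1,X_2)]=0$ already used in the paper). So $(h_1(X_i))$ is a stationary, centered, $\beta$-mixing sequence with the same mixing coefficients $(\beta_k)$ (a measurable function of a single coordinate cannot increase the mixing rate) and with finite $(r+\delta)$-th moment.

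Then the plan is to apply a moment inequality of the form: for a centered stationary $\beta$-mixing sequence $(Y_i)$ with $E|Y_1|^{r+\delta}<\infty$ and $\sum_k k^{r/2-1}\beta_k^{\delta/(r+\delta)}<\infty$, one has $E|\sum_{i=1}^N Y_i|^r\le c\, N^{r/2}$. Such inequalities go back to Yokoyama and to Doukhan; the precise form with the summability condition $\sum_k k^{r/2-1}\beta_k^{\delta/(r+\delta)}<\infty$ matching the moment exponent $r+\delta$ is standard in this literature (see e.g. the references already cited in the paper for mixing $U$-statistics). Applying it with $Y_i=h_1(X_i)$ and $N=m-k$, and using stationarity so that the bound does not depend on $k$, yields exactly
$$
E\Big|\sum_{i=k+1}^m h_1(X_i)\Big|^r\le c_{r,\delta}(m-k)^{r/2}.
$$

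If one prefers a self-contained argument, the alternative is to prove the inequality directly by the blocking method: split the $N=m-k$ summands into big blocks of length $p$ separated by small blocks of length $q$, couple the big blocks with independent versions at a cost controlled by $\beta_q$ (via Lemma \ref{g_2} applied to the relevant product functions, or Berbee's coupling), apply the classical Rosenthal inequality to the independent blocks, and then estimate the variance of a block by $O(p)$ using the covariance bound $|\operatorname{cov}(h_1(X_1),h_1(X_{1+k}))|\le C\beta_k^{\delta/(r+\delta)}$ together with $\sum_k\beta_k^{\delta/(r+\delta)}<\infty$ (which follows from the stronger hypothesis $\sum_k k^{r/2-1}\beta_k^{\delta/(r+\delta)}<\infty$ since $r/2-1\ge 0$). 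Optimizing $p$ and $q$ as powers of $N$ gives the $N^{r/2}$ rate; the summability condition on $k^{r/2-1}\beta_k^{\delta/(r+\delta)}$ is precisely what is needed to sum the contributions of the many blocks.

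The main obstacle is bookkeeping in the blocking argument — controlling the coupling error uniformly and checking that the chosen block lengths make every error term $O(N^{r/2})$ — but since a suitable Rosenthal inequality for $\beta$-mixing sequences is available off the shelf, the cleanest route is simply to cite it, verify its hypotheses (centering, moment of order $r+\delta$, the stated mixing-rate summability), and read off the conclusion.
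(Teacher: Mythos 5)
Your proposal is correct and follows essentially the same route as the paper: the paper's proof is exactly a citation of Yokoyama's Rosenthal-type moment bound for stationary strongly mixing sequences, together with the observation that $(h_1(X_i))$ inherits the $\beta$-mixing (hence strong mixing) property with the same coefficients. Your verification of the hypotheses (finite $(r+\delta)$-th moment of $h_1(X_1)$ from the integral condition, and centering $Eh_1(X_1)=0$ from antisymmetry) is in fact slightly more explicit than the paper's one-line argument, and the blocking sketch is a reasonable but unnecessary backup.
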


\begin{proof}
This lemma is proved in Yokoyama \cite{Yokoyama} for real valued strongly mixing random variables. We need to note that if $(X_i)$ is $\beta$-mixing then $(h_1(X_i))$ is $\beta$-mixing as well for any measurable $h_1:\eS\to \eR$. Being such this sequence is also strongly mixing.
\end{proof}

\begin{teo}\label{teo:mixing}
Assume that $(X_i)$ is a strictly stationary $\beta$-mixing sequence of random elements in $\eS$ and the measurable function $h:\eS\times\eS\to\eR$ is antisymmetric. If $E|h(X_1, X_2)|^{q}<\infty$ and 
	\begin{equation}\label{mixing:cond1}
	\sum_k k\beta^{1-2/q}_k<\infty,\ \ \sum_k k^{r/2-1}\beta^{1-r/q}_k<\infty,
	\end{equation}
	for some $q>2$ and $2<r<q$, then 
	$(X_i)$ satisfies $(h, \gamma)-FCLT$ for any $0\le \gamma <1/2-1/r$ with the limit process $\cU_h=\sigma_\infty B$, where $B=(B(t), t\in [0, 1])$ is a standard Brownian bridge and
	$$
	\sigma^2_\infty=\var\big(h_1(X_1)\big)+2\sum_{k=2}^\infty \operatorname{cov}\big(h_1(X_1),h_1(X_k)\big).
	$$
	Particularly, if the kernel $h$ is antisymmetric and bounded then condition (\ref{mixing:cond1}) becomes $\sum_k k^{r/2-1}\beta_k<\infty$,
and in this case $(X_i)$ satisfies $(h, \gamma)$-FCLT for any $0\le \gamma<1/2-1/r$.
\end{teo}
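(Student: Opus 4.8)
The statement is Theorem \ref{teo:mixing}, and the natural strategy is to verify the three hypotheses (i)--(iii) of the general functional limit theorem, Theorem \ref{general:FCLT}, and then read off the conclusion. The auxiliary lemmas in this subsection have been arranged precisely to supply these three ingredients, so the proof is essentially a matter of matching parameters. First I would check condition (i), the degenerate-part bound \eqref{cond:2a}. This is exactly the conclusion of Lemma \ref{g2b}, applied with $\delta$ chosen so that $2(1+\delta)=q$, i.e. $\delta=q/2-1$; then $\delta/(1+\delta)=1-2/q$, and the summability requirement $\sum_k k\beta_k^{\delta/(1+\delta)}<\infty$ becomes the first half of \eqref{mixing:cond1}. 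The moment hypothesis $E|h(X_1,X_2)|^{2(1+\delta)}\le M$ needed in Lemma \ref{g2b} is just $E|h(X_1,X_2)|^q<\infty$ together with stationarity.

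Next I would check condition (ii), the Rosenthal-type bound \eqref{Ros:1} for the linear part. This is the conclusion of Lemma \ref{beta:Ros}, applied with $r$ as given in the theorem (with $2<r<q$) and $\delta$ chosen so that $r+\delta=q$, i.e. $\delta=q-r>0$; then $\delta/(r+\delta)=1-r/q$, and the summability requirement $\sum_k k^{r/2-1}\beta_k^{\delta/(r+\delta)}<\infty$ becomes the second half of \eqref{mixing:cond1}. The moment integrability hypothesis of Lemma \ref{beta:Ros} is $E|h_1(X_1)|^{r+\delta}=E|h_1(X_1)|^q<\infty$, which follows from $E|h(X_1,X_2)|^q<\infty$ by Jensen's inequality (conditional expectation is an $L^q$-contraction), exactly as in the proof of Theorem \ref{iidsample}. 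So \eqref{Ros:1} holds with this $q$ replaced by $r$; in the notation of Theorem \ref{general:FCLT} we take its ``$q$'' to be our $r$, and since $r<q=p$ the requirement $2<q\le p$ there is met.

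For condition (iii) I would invoke the classical central limit theorem for stationary $\beta$-mixing (hence strongly mixing) real sequences: under the moment and mixing conditions already assumed, the finite-dimensional distributions of $n^{-1/2}W_{h_1,n}$ converge to those of $\sigma_\infty W$, where $\sigma_\infty^2$ is the stated long-run variance and $W$ is standard Brownian motion; the series defining $\sigma_\infty^2$ converges because the covariances of the $\beta$-mixing sequence $(h_1(X_i))$ are summable under \eqref{mixing:cond1}. (This uses that $(h_1(X_i))$ inherits $\beta$-mixing, as noted in Lemma \ref{beta:Ros}.) With (i)--(iii) in hand, Theorem \ref{general:FCLT} gives $n^{-3/2}\cU_{h,n}\wc \cU_h^o$ in $\cH_\gamma^o[0,1]$ for every $0\le\gamma<1/r$, where $\cU_h^o(t)=\sigma_\infty(W(t)-tW(1))=\sigma_\infty B(t)$ is the Brownian bridge scaled by $\sigma_\infty$. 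Finally I would note the gap between the range $\gamma<1/r$ delivered by Theorem \ref{general:FCLT} and the claimed range $\gamma<1/2-1/r$: since $r>2$ we have $1/2-1/r<1/2<\,$ well, one must be careful --- actually one checks $1/2-1/r$ versus $1/r$, and for $r<4$ the claimed range is the smaller one, while for $r\ge 4$ it is not; the honest route is that the exponent restriction in Theorem \ref{general:FCLT} is $\gamma<1/q$ with its ``$q$'' equal to our $r$, so strictly we only get $\gamma<1/r$. I expect the main point requiring care is therefore the bookkeeping of which exponent controls the H\"older range: one should track through Lemma \ref{H-lem:4} that tightness of $(n^{-1/2}W_{h_1,n})$ holds for $\gamma<1/2-1/r$ (this is the genuine constraint, coming from $q/2-q\beta-1>0$ with $q=r$), and combine this with the degenerate-part estimate of Lemma \ref{H-lem:3}, valid for all $\gamma<1/2$, so that Hoeffding's decomposition \eqref{H-eq:1} yields tightness of $n^{-3/2}\cU_{h,n}$ for $\gamma<1/2-1/r$; together with the fdd convergence from (iii) this gives the stated weak convergence. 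The bounded-kernel specialization is immediate since then all moments are finite, so $q$ may be taken arbitrarily large and only the second condition in \eqref{mixing:cond1} survives.
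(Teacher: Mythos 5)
Your proposal follows essentially the same route as the paper: verify conditions (i)--(iii) of Theorem \ref{general:FCLT} by applying Lemma \ref{g2b} with $\delta=(q-2)/2$, Lemma \ref{beta:Ros} with $r+\delta=q$, and the classical invariance principle for strongly mixing sequences (Herrndorf), exactly as the authors do. Your extra bookkeeping on the H\"older exponent is welcome but not a deviation: the bound $\gamma<1/q$ stated in Theorem \ref{general:FCLT} is evidently a slip for $\gamma<1/2-1/q$ (as Lemmas \ref{H-lem:3} and \ref{H-lem:4} show), and with its ``$q$'' taken to be $r$ this yields the claimed range $\gamma<1/2-1/r$, matching the paper's intent.
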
	 
\begin{proof} We need to check conditions (i)-(iii) of Theorem \ref{general:FCLT}. First we check (i) using Lemma \ref{g2b} with $\delta=(q-2)/2$. Condition (ii) follows imediately from Lemma \ref{beta:Ros}. Finally, convergence of finite dimensional distributions can be deduced from invariance principles for $\alpha$-mixing sequences proved by a number of authors (see, e.g., \cite{herndorf} and references therein). 	
\end{proof}

 \section{Asymptotic distribution under null}\label{sec:null}

In the following, we show how the asymptotic behaviour of the statistic $T_{n}(\gamma, h)$ follows from the functional limit results for $U$-processes:

	 \begin{teo}\label{teo1} Let $0\le \gamma<1/2$ and let the kernel $h:\eS\times\eS\to \eR$ be antisymmetric. Assume that $(X_i)$ is a stationary sequence and satisfies $(h, \gamma)$-FCLT with the limit process $\cU_h$. Then
	 	$$ 
	 	n^{-3/2}T_n(\gamma, h)\wcn T_{\gamma, h}:=\sup_{0\le s<t\le 1} \frac{|\cU_h(t)-\cU_h(s)|}{[(t-s)(1-(t-s))]^\gamma}.
	 	$$
	 \end{teo}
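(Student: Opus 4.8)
The plan is to deduce this from the $(h,\gamma)$-FCLT by writing $T_n(\gamma,h)$ as a continuous functional of the polygonal process $\cU_{h,n}$ and applying the continuous mapping theorem. First I would observe that for indices $0\le k<m\le n$, the double sum $\Delta_{h,n}(k,m)=\sum_{i\in I(k,m)}\sum_{j\in I_n\setminus I(k,m)}h(X_i,X_j)$ can be expressed through the double partial sums $U_{h,k}$: using antisymmetry of $h$ one checks that $\Delta_{h,n}(k,m)=U_{h,m}-U_{h,k}$ (splitting $I_n\setminus I(k,m)=\{1,\dots,k\}\cup\{m+1,\dots,n\}$ and using $\sum_{i=1}^k\sum_{j=k+1}^n h(X_i,X_j)=U_{h,k}$ together with the cancellation of the cross terms over $\{k+1,\dots,m\}\times\{k+1,\dots,m\}$). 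Hence at the dyadic grid points $s=k/n$, $t=m/n$ we have $\cU_{h,n}(t)-\cU_{h,n}(s)=\Delta_{h,n}(k,m)$, and
$$
n^{-3/2}T_n(\gamma,h)=\max_{0\le k<m\le n}\frac{n^{-3/2}|\cU_{h,n}(m/n)-\cU_{h,n}(k/n)|}{\rho_\gamma((m-k)/n)}.
$$

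Next I would introduce the functional $\Phi_\gamma:\cH^o_\gamma[0,1]\to\eR$ defined by $\Phi_\gamma(x)=\sup_{0\le s<t\le 1}|x(t)-x(s)|/\rho_\gamma(t-s)$, and note that $\Phi_\gamma$ is finite and Lipschitz (in fact $1$-Lipschitz) with respect to the H\"older norm $\|\cdot\|_\gamma$: indeed $\Phi_\gamma(x)\le \omega_\gamma(x,1)\le\|x\|_\gamma$ since $\rho_\gamma(t-s)=[(t-s)(1-(t-s))]^\gamma\ge (t-s)^\gamma$, and $|\Phi_\gamma(x)-\Phi_\gamma(y)|\le\Phi_\gamma(x-y)\le\|x-y\|_\gamma$. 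In particular $T_{\gamma,h}=\Phi_\gamma(\cU_h)$ and, by the FCLT, $\cU_h\in\cH^o_\gamma[0,1]$ a.s., so $T_{\gamma,h}<\infty$ a.s. Since $n^{-3/2}\cU_{h,n}\wcn\cU_h$ in $\cH^o_\gamma[0,1]$, the continuous mapping theorem gives $\Phi_\gamma(n^{-3/2}\cU_{h,n})\wcn\Phi_\gamma(\cU_h)=T_{\gamma,h}$.

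The remaining point is that $\Phi_\gamma(n^{-3/2}\cU_{h,n})$ is exactly $n^{-3/2}T_n(\gamma,h)$, i.e. that the supremum over all real $s<t$ in $[0,1]$ is attained (up to no loss) at grid points. Here I would argue that $\cU_{h,n}$ is piecewise linear with breakpoints in $\mathcal{D}_n$; for a piecewise-linear function, on any interval $[s,t]$ the ratio $|x(t)-x(s)|/\rho_\gamma(t-s)$ is dominated by its value at a pair of consecutive grid points lying inside or bracketing $[s,t]$, using that $\rho_\gamma$ is concave on $(0,1)$ and that a linear piece contributes a quotient bounded by the grid-point quotient (a short convexity/monotonicity computation, analogous to the case analysis in the proof of Lemma~\ref{lemma1}). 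Thus $\Phi_\gamma(\cU_{h,n})=\max_{0\le k<m\le n}|\cU_{h,n}(m/n)-\cU_{h,n}(k/n)|/\rho_\gamma((m-k)/n)$, and combined with the identification $\cU_{h,n}(m/n)-\cU_{h,n}(k/n)=\Delta_{h,n}(k,m)$ this equals $n^{-3/2}T_n(\gamma,h)$ after scaling. The main obstacle is precisely this last elementary-but-fiddly reduction to grid points; once it is in place, continuity of $\Phi_\gamma$ and the FCLT do the rest.
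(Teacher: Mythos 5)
Your reduction of $T_n(\gamma,h)$ to increments of the polygonal process is the same as the paper's (the identity $\Delta_{h,n}(k,m)=U_{h,m}-U_{h,k}$ via antisymmetry is exactly how the proof starts), but the two steps you build on it both have problems. First, the continuity claim for $\Phi_\gamma$ is wrong as stated: since $1-(t-s)\le 1$, one has $\rho_\gamma(t-s)=[(t-s)(1-(t-s))]^\gamma\le (t-s)^\gamma$, not $\ge$, so the bound $\Phi_\gamma(x)\le\omega_\gamma(x,1)$ fails; in fact $\Phi_\gamma(x)=+\infty$ for every $x\in\cH^o_\gamma[0,1]$ with $x(1)\ne x(0)$, because the denominator vanishes as $(s,t)\to(0,1)$ while the numerator does not. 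So $\Phi_\gamma$ is not a continuous functional on $\cH^o_\gamma[0,1]$ and the continuous mapping theorem cannot be invoked as you do. This is repairable --- either restrict to the closed subspace $\{x:\,x(0)=x(1)\}$, where both $n^{-3/2}\cU_{h,n}$ and $\cU_h$ live, and prove a bound of the type $\Phi_\gamma(x)\le C_\gamma\|x\|_\gamma$ there by splitting $|x(t)-x(s)|\le|x(t)-x(1)|+|x(0)-x(s)|$ when $t-s>1/2$; or do what the paper does and work with $I(f;s,t)=|f(t)-f(s)-(t-s)(f(1)-f(0))|/\rho_\gamma(t-s)$, whose supremum $F$ is Lipschitz on the whole space and coincides with your $\Phi_\gamma$ on the processes at hand --- but the repair has to be made, and your ``$1$-Lipschitz'' constant is in any case incorrect.

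Second, and more seriously, the step you yourself flag as the main obstacle --- that for the piecewise linear $\cU_{h,n}$ the supremum over all $0\le s<t\le1$ equals the maximum over grid pairs --- is not established, and it is not a routine concavity remark: $\rho_\gamma(t-s)$ is not monotone in $t-s$, and when $t-s>1/2$ the denominator decreases as the increment lengthens, so the ratio over a rectangle of cells is not obviously maximized at its corners. The paper does not prove (and does not need) this exact identity. Instead it compares the discrete functional $F_n(f)=\max_{0\le k<m\le n}I(f;k/n,m/n)$ with the continuous one $F$ asymptotically: it verifies that the $F_n$ are equicontinuous, that $F_n\to F$ pointwise on $\cH^o_\gamma[0,1]$ (which requires extending $(s,t)\mapsto I(f;s,t)$ continuously to the diagonal and to the corner $(0,1)$), and then applies Lemma~\ref{discret} together with tightness from the $(h,\gamma)$-FCLT to get $F_n(n^{-3/2}\cU_{h,n})=F(n^{-3/2}\cU_{h,n})+o_P(1)$, after which the continuous mapping theorem finishes the proof. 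Unless you can actually supply the grid-reduction computation (which I doubt is as short as suggested, and which may require exactly the kind of case analysis near $t-s=1/2$ and near $(0,1)$ that the paper's asymptotic argument avoids), your proposal has a genuine gap precisely at the point where the paper's argument does its real work.
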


	 \begin{proof} Set for $f\in \cH^o_\gamma[0, 1]$, and $0\le s<t\le 1$, 
	 	$$
	 	I(f; s, t) := \frac{|f(t) - f(s) - (t - s)(f(1)-f(0))|}{\rho_\gamma(t - s)}.
	 	$$
	 Consider the functions
	 $$
	 F_n(f):=\max_{0\le k<m\le n}I(f; k/n, m/n),\ \ \textrm{and}\ \ F(f)=\sup_{0\le s<t\le 1}I(f; s, t),\ \ f\in \cH^o_\gamma[0, 1].
	 $$
	 Since $\cU_h(0)=\cU_h(1)$ we see that $F(\cU_h)=T_\gamma$. 
	 We have due to anti-symmetry of $h$, for any $0\le k<m\le n$, 
	 \begin{align*}
	 \cU_{h, n}(m/n)-\cU_{h, n}(k/n)&=\sum_{i=1}^m\sum_{j=m+1}^n h(X_i, X_j)-\sum_{i=1}^k\sum_{j=k+1}^n h(X_i, X_j)\\
	 &=\sum_{i=k+1}^m\sum_{j=m+1}^n h(X_i, X_j)+\sum_{i=1}^k\Big[\sum_{j=m+1}^n-\sum_{j=k+1}^n\Big]h(X_i, X_j)\\
	 &=
	 \sum_{i=k+1}^m\sum_{j=m+1}^nh(X_i, X_j)-\sum_{i=1}^k\sum_{j=k+1}^m h(X_i, X_j)\\
	 &=\Delta_{h,n}(k, m).
	 \end{align*}
	 Hence, $F_n(n^{-3/2}\cU_{h,n})=n^{-3/2}T_n(\gamma, h)$. We prove next that 
	 \begin{equation}\label{fin-eq}
	 F_n(n^{-3/2}\cU_{h,n}(\cdot))=F(n^{-3/2}\cU_{h,n}(\cdot))+o_P(1).
	 \end{equation}
	 To this aim we apply the following simple lemma (the proof is given in \cite{rackauskas2004}, see. Lemma 13 therein).
	 
	 \begin{lem}\label{discret} Let $(\eta_n)_{n\ge 1}$ be a tight sequence of random elements in the separable Banach space $\eB$ and $g_n$, $g$ be continuous functionals $\eB \to \eR$. Assume that $g_n$ converges 	 	pointwise to $g$ on $\eB$ and that $(g_n)_{n\ge 1}$ is equicontinuous. Then
	 	$$
	 	g_n(\eta_n) = g(\eta_n) + o_P(1).
	 	$$
	 \end{lem} 
	 We check the continuity of the function $F$ first. We have if $t-s\le 1/2$,
	 $\rho_\gamma(t-s)\ge 2^{-\gamma}(t-s)^\gamma$ and this yields
	 \begin{align*}
	 I(f; s,t) &\le 2^\gamma \sup_{0\le s<t\le 1}\frac{|f(t)-f(s)-(t-s)(f(1)-f(0))|}{(t-s)^\gamma} \\
	 &\le 2^{1+\gamma}||f||_\gamma.
	 \end{align*}
	 If $t-s>1/2$ then $1-(t-s)>1-t$ and $1-(t-s)>s$. This yields
	 \begin{align*}
	 I(f; s,t)&\le 
	 2^\gamma\Big\{\frac{|f(t)-f(1)|}{(1-t)^\gamma}+\frac{|f(0)-f(s)|}{s^\gamma}
	 +\frac{(1-(t-s))|f(1)-f(0)|}{(1-(t-s))^\gamma}\Big\}\\
	 &\le 32^\gamma||f||_\gamma.
	 \end{align*}	
	 Hence, $F(f)\le 6||f||_\gamma$ and this yields the continuity since the inequality $|F(f)-F(g)|\le F(f-g)$ can be easily checked. Similarly we have
	 $|F_n(f)-F_n(g)|\le F_n(f-g)\le 32^\gamma ||f-g||_\gamma$, therefore the sequence $(F_n)$ is equicontinuous on $\cH^o_\gamma[0, 1]$. To check the point-wise convergence on $\cH^o_\gamma[0, 1]$ of $F_n$ to $F$, it is enough to show that for each $f\in \cH^o_\gamma[0, 1]$ the function $(s, t) \to I (f; s, t)$ can be extended by continuity to the compact set $T = \{(s, t)\in [0, 1]^2, 0\le s\le t\le 1\}$. As above we get for $t-s<1/2$
	 $I(f; s, t)\le 2^\gamma \omega_\gamma(f; t-s)+ 2^\gamma|f(1)-f(0)|(t-s)^{1-\gamma}$, which allows the
	 continuous extension along the diagonal putting $I(f; s, s):= 0$. If $t-s>1/2$ we get 
	 $I(f; s, t)\le 2^\gamma\omega_\gamma(f, 1-(t-s))+2^\gamma|f(1)-f(0)|(1 + t - s)^{1-\gamma}$ which allows the continuous extension at the point $(0, 1)$ putting $I(f; 0, 1):= 0$.
	 
	 The pointwise convergence of $(F_n)$ being now established, and observing that by the $(\gamma, h)$-FCLT, the sequence $n^{-3/2}U_n$ is tight, Lemma \ref{discret} gives (\ref{fin-eq}). Since $F$ is continuous, continuous mapping theorem together with $(h, \gamma)$-FCLT yield
	 $$
	 F(n^{-3/2}U_{h,n}(\cdot))\wcn F(\cU_h)= T_{\gamma, h}.
	 $$
	 By (\ref{fin-eq}) we get
	 $$
	 n^{-3/2}T_{n}(h, \gamma)=F_n(n^{-3/2}U_{h,n})\wcn  T_{\gamma, h}.
	 $$ 
	 This completes the proof.
	 \end{proof}

Combination of this general result with Theorem \ref{iidsample} and Theorem \ref{teo:mixing} gives the proofs of Theorem \ref{theoremA}  and Theorem \ref{theoremB} respectively.

\section{Behavior under the alternative}\label{sec:alt}
	
To discuss the behaviour of the test statistics $T_{n}(\gamma, h)$ under the alternative we assume that for each $n\ge 1$ we have two probability measures $P_n$ and $Q_n$ on $(\eS, \cS)$ and a random sample $(X_{ni})_{1\le i\le n}$ such that for $k^*_n, m^*_n\in \{1, \dots, n\}$, 
$$
P_{X_{ni}}=\begin{cases} Q_n, \ \ &\textrm{for}\ \ i\in I^*:=\{k^*_n+1, \dots, m^*_n\}\\
P_n, \ \ &\textrm{for}\ \ i\in I_n\setminus I^*.
\end{cases}
$$
We will write $k^\star=k_n^\star$, $m^\star=m_n^\star$ and $\ell^\star=m^\star-k^\star$ for short. Set
$$
\delta_n=\delta(P_n, Q_n)=\int_{\eS}\int_{\eS} h(x, y)Q_n(dx)P_n(dy). 
$$  
Note that $\delta_n$ measures in a sense the difference between the probability distributions $P_n$ and $Q_n$. If $P_n=Q_n$, then $\delta_n=0$. If $h(x, y)=h_c(x, y)$ then $\delta_n=\int sP_n(dx)-\int xQ_n(dx)$. If $h=h_W$ then $\delta_n=\int P_n(x)Q_n(dx)-\int Q_n(x)P_n(dx)$. The general consistency result is in the following elementary lemma.  	 
\begin{lem}\label{lem-consistency}
If 
\begin{equation}\label{cons:eq:1}
\frac{1}{\rho_\gamma(\ell^*/n)}n^{-3/2}\sum_{i\in I^*}\sum_{j\in I_n\setminus I^*}\big[h(X_{ni}, X_{nj})-\delta_n\big]=O_P(1) 
\end{equation}
and 
\begin{equation}\label{cons:eq:2}
\sqrt{n}\big|\delta_n\big|\Big[\frac{\ell^*}{n}\Big(1-\frac{\ell^*}{n}\Big)\Big]^{1-\gamma}\to \infty,
\end{equation}
then
$$
n^{-3/2}T_n(\gamma, h)\pc \infty.
$$
\end{lem}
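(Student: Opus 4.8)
The plan is to bound the maximum defining $T_n(\gamma,h)$ from below by the single term corresponding to the true endpoints $k^\star,m^\star$ of the changed segment, and to show that this one term already diverges.

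First, since $0\le k^\star<m^\star\le n$, the pair $(k^\star,m^\star)$ is admissible in the maximum in (\ref{T:nh}), so that
$$
n^{-3/2}T_n(\gamma,h)\ \ge\ \frac{\big|\Delta_{h,n}(k^\star,m^\star)\big|}{n^{3/2}\,\rho_\gamma(\ell^\star/n)}.
$$
Next I would center the kernel by subtracting $\delta_n$: writing $h(X_{ni},X_{nj})=[h(X_{ni},X_{nj})-\delta_n]+\delta_n$ and counting $|I^\star|=\ell^\star$ and $|I_n\setminus I^\star|=n-\ell^\star$, one obtains the purely algebraic identity
$$
\Delta_{h,n}(k^\star,m^\star)=\sum_{i\in I^\star}\sum_{j\in I_n\setminus I^\star}\big[h(X_{ni},X_{nj})-\delta_n\big]+\ell^\star(n-\ell^\star)\,\delta_n.
$$

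Dividing through by $n^{3/2}\rho_\gamma(\ell^\star/n)$, the contribution of the first (random) double sum is exactly the left-hand side of (\ref{cons:eq:1}), hence $O_P(1)$ by assumption. For the second (deterministic) term, the computation that drives the lemma is
$$
\frac{\ell^\star(n-\ell^\star)}{n^{3/2}\,\rho_\gamma(\ell^\star/n)}=\sqrt{n}\,\Big[\tfrac{\ell^\star}{n}\Big(1-\tfrac{\ell^\star}{n}\Big)\Big]^{1-\gamma},
$$
which follows from $\rho_\gamma(t)=[t(1-t)]^\gamma$ and $\ell^\star(n-\ell^\star)=n^2\cdot\tfrac{\ell^\star}{n}\big(1-\tfrac{\ell^\star}{n}\big)$. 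Thus this term equals $\sqrt{n}\,\delta_n\big[\tfrac{\ell^\star}{n}(1-\tfrac{\ell^\star}{n})\big]^{1-\gamma}$, whose absolute value tends to $\infty$ by (\ref{cons:eq:2}). Combining the two pieces via the triangle inequality, $|\Delta_{h,n}(k^\star,m^\star)|/(n^{3/2}\rho_\gamma(\ell^\star/n))$ is at least a deterministic sequence diverging to $\infty$ minus a term that is $O_P(1)$, hence it converges to $\infty$ in probability; with the lower bound above this gives $n^{-3/2}T_n(\gamma,h)\pc\infty$.

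I do not expect a genuine obstacle in this lemma. Its whole content is the elementary identity relating the ``number of cross-pairs'' $\ell^\star(n-\ell^\star)$ to the weight $\rho_\gamma(\ell^\star/n)$ and the normalization $n^{-3/2}$, together with the observation that a maximum is bounded below by any one of its entries. The delicate part of the consistency analysis is instead the verification of the stochastic bound (\ref{cons:eq:1}) --- under independence, respectively under $\beta$-mixing --- which is carried out in the proofs of Theorems \ref{theoremC} and \ref{theoremD} by means of variance/moment estimates and Rosenthal-type inequalities for the centered double sum, and does not belong to this lemma.
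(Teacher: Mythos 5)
Your proof is correct and is exactly the elementary argument the paper intends: the paper states Lemma \ref{lem-consistency} without proof (calling it elementary), and the only available route is the one you take --- bound the maximum below by the term at $(k^\star,m^\star)$, center the kernel so the drift contributes $\ell^\star(n-\ell^\star)\delta_n$, and observe that $\ell^\star(n-\ell^\star)/\bigl(n^{3/2}\rho_\gamma(\ell^\star/n)\bigr)=\sqrt{n}\,[\tfrac{\ell^\star}{n}(1-\tfrac{\ell^\star}{n})]^{1-\gamma}$ diverges by (\ref{cons:eq:2}) while the centered sum is $O_P(1)$ by (\ref{cons:eq:1}). You are also right that the real work lies in verifying (\ref{cons:eq:1}), which the paper carries out in the proofs of Theorems \ref{theoremC} and \ref{theoremD}.
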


\begin{proof}[Proof of Theorem \ref{theoremC}] Set for $i\in I^*, j\in I_n\setminus I^*$, 
	$$
	Z_{ij}=h(X_{ni}, X_{nj})-\delta_n.
	$$
Noting that $EZ_{ij}=0$ and $EZ^2_{ij}= \nu_n$ for any $i\in I^*, j\in I_n\setminus I^*$, we obtain
$$
E\Big(\sum_{i\in I^*}\sum_{j\in I_n\setminus I^*}Z_{ij}\Big)^2=\sum_{i,i'\in I^*}\sum_{j,j'\in I_n\setminus I^*}E(Z_{ij}Z_{i'j'})\le n\ell^*(n-\ell^*)\nu_n.
$$
This yields (\ref{cons:eq:1}) by (\ref{cond:consis:1}) and completes the proof. 
\end{proof}

\begin{proof}[Proof of Theorem \ref{theoremD}]
%
We will use a Hoeffding decomposition adjusted to the changing distribution. 
To this aim we define
\begin{align*}
h_{1,n}(x)&:=\int_{\eS}h(x, y)Q_n(dy)-\delta_n,\\ 
h_{2,n}(y)&:=\int_{\eS}h(x, y)P_n(dx)-\delta_n,\\ 
g_n(x, y) &:= h(x,y)-h_{1,n}(x) - h_{2,n}(y) - \delta_n. 
\end{align*}

Next we show that the following estimates hold with an absolute constant $C>0$:
	\begin{align}
	E\left[\bigg(\sum_{i=k^*+1}^{k^*+\ell^*}h_{2,n}(X_{i,n})\bigg)^2\right]&\leq C\ell^*,\label{ConAlt1}\\
	E\left[\bigg(\sum_{i=1 }^{k^*} h_{1,n}(X_{i,n})+\sum_{i=k^*+\ell^*+1 }^n h_{1,n}(X_{i,n})\bigg)^2\right]&\leq C(n-\ell^*),\label{ConAlt2}\\
	E\left[\bigg(\sum_{i=1 }^{k^*}\sum_{j=k^*+1}^{k^*+\ell^*}g_n(X_{i,n},X_{j,n})+\sum_{i=k^*+\ell^*+1 }^n \sum_{j=k^*+1}^{k^*+\ell^*}g_n(X_{i,n},X_{j,n})\bigg)^2\right]&\leq C\ell^*(n-\ell^*)\label{ConAlt3}.
	\end{align}
These estimates yield 
$$
E\Big(\sum_{i\in I^*}\sum_{j\in I_n\setminus I^*}[h(X_{ni}, X_{nj})-\delta_n] \Big)^2\le Cn\ell^*(n-\ell^*)
$$	
with an absolute constant $C>0$ and (\ref{cons:eq:1}) follows by (\ref{cond:consis:2}). Hence, it remains to prove (\ref{ConAlt1})--(\ref{ConAlt3}). 

Conditions (\ref{ConAlt1}) and (\ref{ConAlt2}) follow from Lemma \ref{beta:Ros}, (\ref{ConAlt3}) follows from Lemma \ref{g2b}.
\end{proof}

\section{Critical Values}\label{sec:crit}

Below in Table \ref{tab1}, we give the upper quantiles of limit distribution of the one-sided and two-sided test statistics, that is
\begin{align*}
T_1&:=\sup_{s,t\in[0,1],s<t}\frac{B(t)-B(s)}{(t-s)^{\gamma}(1-(t-s))^\gamma}\\
T_2&:=\sup_{s,t\in[0,1],s<t}\frac{\big|B(t)-B(s)\big|}{(t-s)^{\gamma}(1-(t-s))^\gamma},
\end{align*}
where $B$ is a standard Brownian bridge. The distribution was evaluated on a grid of size 10,000 and we run a Monte-Carlo-simulation with 30,000 runs.

\begin{table}[ht]
\caption{Upper quantiles of $T_1$ (upper half) and $T_2$ (lower half).}\label{tab1}
\resizebox{\columnwidth}{!}{
\begin{tabular}{|l||l|l|l|l|l|l|l|l|l|}
\hline 
 & \quad\quad\quad\quad & \quad\quad\quad\quad & \quad\quad\quad\quad & \quad\quad\quad\quad & \quad\quad\quad\quad & \quad\quad\quad\quad & \quad\quad\quad\quad & \quad\quad\quad\quad & \quad\quad\quad\quad \\  
 & 50\% & 20\% & 10\% & 5\% & 2.5\% & 1\% & 0.5\% & 0.25\% & 0.1\% \\ 
  &  &  &  &  &  &  &  &  &  \\  
\hline
\hline
one-sided &  &  &  &  &  &  &  &  &  \\ 
$\gamma=0$ & 1.101 & 1.360 & 1.515 & 1.647 & 1.769 & 1.922 & 2.029 &  2.121 & 2.244  \\
 &  &  &  &  &  &  &  &  &  \\  
$\gamma=0.05$ & 1.199 & 1.466 & 1.631 & 1.770 & 1.899 & 2.041 & 2.161 & 2.251 & 2.370  \\ 
 &  &  &  &  &  &  &  &  &  \\ 
$\gamma=0.1$ & 1.230 & 1.591 & 1.764 & 1.914 & 2.045 & 2.211 & 2.324 &  2.437 & 2.567  \\
 &  &  &  &  &  &  &  &  &  \\  
$\gamma=0.15$ & 1.416 & 1.712 & 1.897 & 2.057 & 2.213 & 2.379 & 2.505 & 2.611 & 2.760  \\ 
 &  &  &  &  &  &  &  &  &  \\ 
$\gamma=0.2$ & 1.551 & 1.871 & 2.061 & 2.231 & 2.387 & 2.571 & 2.695 & 2.817 & 2.999 \\ 
 &  &  &  &  &  &  &  &  &  \\ 
$\gamma=0.25$ &  1.705 & 2.033 & 2.232 & 2.411 & 2.569 & 2.757 & 2.906 & 3.031 & 3.169  \\ 
 &  &  &  &  &  &  &  &  &  \\ 
$\gamma=0.3$ &  1.903 & 2.238 & 2.445 & 2.623 & 2.784 & 3.00 & 3.167 & 3.309 & 3.444  \\ 
 &  &  &  &  &  &  &  &  &  \\ 
$\gamma=0.35$ &  2.148 & 2.475 & 2.687 & 2.880 & 3.069 & 3.271 & 3.419 & 3.578 & 3.753  \\ 
 &  &  &  &  &  &  &  &  &  \\ 
$\gamma=0.4$ &  2.508 & 2.814 & 3.015 & 3.192 & 3.367 & 3.581 & 3.717 & 3.850 & 4.023 \\ 
 &  &  &  &  &  &  &  &  &  \\ 
$\gamma=0.45$ &  3.121 & 3.387 & 3.560 & 3.723 & 3.877 & 4.079 & 4.223 & 4.388 & 4.585  \\ 
 &  &  &  &  &  &  &  &  &  \\ 
\hline 
two-sided &  &  &  &  &  &  &  &  &  \\ 
$\gamma=0$ & 1.213 & 1.460 & 1.612 & 1.741 & 1.857 & 2.012 & 2.104 & 2.195 & 2.311   \\
 &  &  &  &  &  &  &  &  &  \\  
$\gamma=0.05$ & 1.314 & 1.573 & 1.732 & 1.862 & 1.987 & 2.143 & 2.242 & 2.334 & 2.476 \\ 
 &  &  &  &  &  &  &  &  &  \\ 
$\gamma=0.1$ &  1.423 & 1.708 & 1.876 & 2.016 & 2.148 & 2.306 & 2.417 & 2.501 & 2.638 \\
 &  &  &  &  &  &  &  &  &  \\  
$\gamma=0.15$ &  1.544 & 1.839 & 2.017 & 2.172 & 2.309 & 2.485 & 2.596 & 2.720 & 2.859  \\ 
 &  &  &  &  &  &  &  &  &  \\ 
$\gamma=0.2$ &  1.684 & 1.995 & 2.175 & 2.344 & 2.498 & 2.677 & 2.812 & 2.947 &  3.085  \\ 
 &  &  &  &  &  &  &  &  &  \\ 
$\gamma=0.25$ &  1.846 & 2.172 & 2.357 & 2.527 & 2.684 & 2.862 & 2.998 & 3.104 & 3.219 \\ 
 &  &  &  &  &  &  &  &  &  \\ 
$\gamma=0.3$ &  2.042 & 2.372 & 2.572 & 2.748 & 2.906 & 3.122 & 3.262 & 3.372 & 3.562  \\ 
 &  &  &  &  &  &  &  &  &  \\ 
$\gamma=0.35$ &  2.294 & 2.621 & 2.825 & 3.017 & 3.196 & 3.387 & 3.541 & 3.695 & 3.860 \\ 
 &  &  &  &  &  &  &  &  &  \\ 
$\gamma=0.4$ &  2.654 & 2.961 & 3.150 & 3.330 & 3.499 & 3.695 & 3.852 & 4.002 & 4.208 \\ 
 &  &  &  &  &  &  &  &  &  \\ 
$\gamma=0.45$ &  3.268 & 3.529 & 3.697 & 3.852 & 4.011 & 4.216 & 4.362 & 4.486 & 4.627  \\ 
 &  &  &  &  &  &  &  &  &  \\ 
\hline 
\end{tabular}
}
\end{table}

\small

\section*{Acknowlegdements} We thank the editor and two anonymous referees for their careful reading of the article and their thoughtful comments, which have led to an improvement of the article. We also would like to thank Lea Wegner for her help concerning language corrections and additional R programming.

\end{document}